\numberwithin{equation}{section}
\theoremstyle{plain}
\newtheorem{thm}{Theorem}[section]
\newtheorem{proposition}[thm]{Proposition}
\newtheorem{cor}[thm]{Corollary}
\newtheorem{lemma}[thm]{Lemma}
\newtheoremstyle{noparens}%
 {}{}%
 {\itshape}{}%
 {\bfseries}{.}%
 { }%
 {\thmname{#1}\thmnumber{ #2}\mdseries\thmnote{ #3}}
\theoremstyle{noparens}
\newtheorem{lemmaNoParens}[thm]{Lemma}
\newtheorem{thmNoParens}[thm]{Theorem}
\newtheorem{propositionNoParens}[thm]{Proposition}
\newtheorem{remarkNoParens}[thm]{Remark}
\theoremstyle{definition}
\newtheorem{defn}[thm]{Definition}
\theoremstyle{remark}
\newtheorem{rmk}[thm]{Remark}
\newcommand{\rmnum}[1]{\romannumeral #1}
\newcommand{\Rmnum}[1]{\expandafter\@slowromancap\romannumeral #1@}
\newcommand{\li}{\lim\limits_{z\rightarrow\partial\Omega}}
\newcommand{\lin}{\lim\limits_{n\rightarrow\infty}}
\newcommand{\K}{K$\ddot{\operatorname{a}}$hler}
\begin{document}
\title{Asymptotic characterizations of Strong Pseudoconvexity on Pseudoconvex Domains of Finite Type in $\mathbb{C}^2$}
\author{Jinsong Liu\textsuperscript{1,2} $\&$ Xingsi Pu\textsuperscript{1,2} $\&$ Lang Wang\textsuperscript{1,2}}
\address{$1.$ HLM, Academy of Mathematics and Systems Science,
Chinese Academy of Sciences, Beijing, 100190, China}
\address{$2.$ School of
Mathematical Sciences, University of Chinese Academy of Sciences,
Beijing, 100049, China }
\subjclass[2020]{32T15, 32T25, 53C20}
\keywords{Strong pseudoconvexity, Holomorphic sectional curvature, Pseudoconvex domains, Finite type}
\email{liujsong@math.ac.cn,\:puxs@amss.ac.cn,\:wanglang2020@amss.ac.cn}

\begin{abstract}
In this paper, we provide some characterizations of strong pseudoconvexity by the boundary behavior of intrinsic invariants for smoothly bounded pseudoconvex domains of finite type in $\mathbb{C}^2$. As a consequence, if such domain is biholomorphically equivalent to a quotient of the unit ball, then it is strongly pseudoconvex.
\end{abstract}

\maketitle

\section{Introduction}
In several complex variables, a domain in $\mathbb{C}^n$ with $C^2$-smooth boundary is called strongly pseudoconvex if the Levi form of the boundary is positive definite. Strongly pseudoconvex domains form an important class of complex domains and many studies have focused on such domains in recent years. For instance, Fefferman \cite{fef} obtained the asymptotic formula of Bergman kernel near a boundary point of smoothly bounded strongly pseudoconvex domains. 

For bounded domains in $\mathbb{C}^n$, numerous results show that the asymptotic complex geometry of a bounded strongly pseudoconvex domain coincides with the unit ball. And main theorems in \cite{wong,squ,kim2,kle} describe the behavior of intrinsic invariants near boundary on such domains, which can be stated as follows.
\begin{thm}\label{int}
Suppose $\Omega\subset\mathbb{C}^n$ is a bounded strongly pseudoconvex domain with $C^2$-smooth boundary, then the following statements hold 

(1) $\li s_{\Omega}(z)=1$, where $s_{\Omega}(z)$ is the squeezing function of $\Omega$.

(2) $\li\frac{M_{\Omega}^C(z)}{M_{\Omega}^K(z)}=1$, where  $M_{\Omega}^K(z)$ and $M_{\Omega}^C(z)$ are respectively Kobayashi-Eisenman and Carath$\acute{e}$odory volume elements.

(3) $\li\frac{C_{\Omega}(z,v)}{K_{\Omega}(z,v)}=1$ and $\li\frac{B_{\Omega}(z,v)}{K_{\Omega}(z,v)}=\sqrt{n+1}$ uniformly for $0\neq v\in\mathbb{C}^n$, where $K_{\Omega},C_{\Omega},B_{\Omega}$ are respectively Kobayashi metric, Carath$\acute{e}$odory metric and Bergman metric.

(4) $\li H(B_{\Omega})=-\frac{4}{n+1}$, which means $\li\sup\limits_{0\neq X\in T_z\Omega}\left|H(B_{\Omega})(X)+\frac{4}{n+1}\right|=0$. Here $H(B_{\Omega})$ is the holomorphic sectional curvature of $B_{\Omega}$.

In particular, if $\Omega$ is biholomorphic to the unit ball $\mathbb{B}^n$, then all of intrinsic invariants mentioned above are constants in $\Omega$.

\end{thm}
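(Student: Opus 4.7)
The plan is to reduce the ``in particular'' clause to two classical facts: biholomorphic invariance of the listed quantities, and homogeneity of $\mathbb{B}^n$. The statements $(1)$--$(4)$ themselves are results of \cite{wong,squ,kim2,kle} and are not what the proof should handle here.

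First, I would verify that each of the quantities under consideration is a biholomorphic invariant. The squeezing function $s_{\Omega}(z)$ is defined in terms of injective holomorphic maps into $\mathbb{B}^n$, so it is immediately invariant. The Kobayashi and Carath\'eodory (pseudo)metrics are invariant by definition, as is the Bergman metric through the transformation law of the Bergman kernel; hence the ratios $C_{\Omega}/K_{\Omega}$ and $B_{\Omega}/K_{\Omega}$ are invariant, as are the associated volume elements $M^{K}_{\Omega}$ and $M^{C}_{\Omega}$, and the holomorphic sectional curvature $H(B_{\Omega})$ of the K\"ahler metric $B_{\Omega}$. If $\varphi:\Omega\to\mathbb{B}^{n}$ is a biholomorphism, each invariant at $z\in\Omega$ therefore agrees with the corresponding invariant at $\varphi(z)\in\mathbb{B}^{n}$ (with the tangent vector transported by $d\varphi$).

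Second, I would invoke the transitivity of $\operatorname{Aut}(\mathbb{B}^{n})$: it acts transitively on $\mathbb{B}^{n}$, and the isotropy at $0$ contains $U(n)$, which acts transitively on the unit sphere in $T_{0}\mathbb{B}^{n}$. Combining this with the biholomorphic invariance just established, each of the point-invariants $s_{\mathbb{B}^{n}}$, $M^{C}_{\mathbb{B}^{n}}/M^{K}_{\mathbb{B}^{n}}$ is constant on $\mathbb{B}^{n}$, and each of the direction-dependent ratios $C_{\mathbb{B}^{n}}/K_{\mathbb{B}^{n}}$, $B_{\mathbb{B}^{n}}/K_{\mathbb{B}^{n}}$, as well as $H(B_{\mathbb{B}^{n}})$, is constant in both base point and direction. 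Pulled back by $\varphi$, these become constant functions on $\Omega$.

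Finally, to identify the constants with the asymptotic values appearing in $(1)$--$(4)$, I would appeal to the explicit expressions on the ball: $s_{\mathbb{B}^{n}}\equiv 1$, the Carath\'eodory and Kobayashi metrics coincide on $\mathbb{B}^{n}$ and both equal the Poincar\'e--Bergman metric rescaled by $1/\sqrt{n+1}$, and the Bergman metric on $\mathbb{B}^{n}$ has constant holomorphic sectional curvature $-\tfrac{4}{n+1}$. I do not anticipate a genuine obstacle; the only point to be careful about is that the holomorphic sectional curvature a priori depends on a tangent direction, so constancy on $\Omega$ requires both the transitivity on points and the isotropy action on unit vectors noted above.
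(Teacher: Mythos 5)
The paper does not prove Theorem \ref{int}: items (1)--(4) are recorded as citations to \cite{wong,squ,kim2,kle}, and the final ``in particular'' clause is stated without argument. Your proof of that clause is the natural and correct one: biholomorphic invariance of the quantities involved, combined with the homogeneity of $\mathbb{B}^n$ (transitivity of $\operatorname{Aut}(\mathbb{B}^n)$ on points and of $U(n)\subset\operatorname{Aut}(\mathbb{B}^n)$ on unit tangent directions at the origin), reduces the claim to explicit computations on the ball, and you correctly note that the direction-dependence of the metric ratios and of $H(B_{\Omega})$ is handled by the isotropy action. Since the paper supplies no proof, there is nothing to compare beyond observing that your argument is the standard one.

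One imprecision worth fixing: the individual volume elements $M^K_\Omega$ and $M^C_\Omega$ are \emph{not} biholomorphic invariants. If $\varphi:\Omega\to\Omega'$ is a biholomorphism, then each transforms by a Jacobian factor, e.g.\ $M^K_{\Omega'}(\varphi(z))=|\det d\varphi(z)|^{-2}M^K_\Omega(z)$, and likewise for $M^C$. Only the quotient $q_\Omega=M^C_\Omega/M^K_\Omega$ is invariant, which the paper itself records in the definition of the quotient invariant. Your first paragraph's phrase ``as are the associated volume elements $M^K_\Omega$ and $M^C_\Omega$'' reads as if each is invariant, which is false; your later paragraph correctly switches to the ratio $M^C_{\mathbb{B}^n}/M^K_{\mathbb{B}^n}$, so the conclusion is unaffected, but the first paragraph should state invariance of the quotient rather than of the two elements separately.
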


On the other hand, whether the strong pseudoconvexity of a bounded domain can be characterized by its intrinsic geometry has been studied in recent years. In \cite{charac}, Zimmer firstly proved that for a bounded convex domain $\Omega\subset\mathbb{C}^n$ with $C^{2,\alpha}$-smooth boundary for some $\alpha>0$, if $\li s_{\Omega}(z)=1$, then $\Omega$ is strongly pseudoconvex. Moreover, Joo and Kim\cite{kim} proved the case of smoothly bounded pseudoconvex domains of finite type in $\mathbb{C}^2$. And Nikolov \cite{hex} extended this result to the case of h-extendible domains in $\mathbb{C}^n$.

Later Bracci, Gaussier and Zimmer \cite{pin} proved a version of the characterization related to holomorphic sectional curvature. They proved that for a bounded convex domain with $C^{2,\alpha}$-smooth boundary for some $\alpha>0$, if it admits a complete K$\ddot{\operatorname{a}}$hler metric $g$ such that $\li H(g)=-c$ with some constant $c>0$, then it is strongly pseudoconvex.

Expanding on their previous work, we consider the boundary behavior of intrinsic invariants for smoothly bounded pseudoconvex domains of finite type in $\mathbb{C}^2$, aiming to characterize strong pseudoconvexity on these domains. Our main theorem stated below serves as the converse to Theorem \ref{int} for such domains.
\begin{thm}\label{thm1}
Let $\Omega\subset\mathbb{C}^2$ be a smoothly  bounded pseudoconvex domain of finite type, the following statements are equivalent

(1) $\Omega$ is strongly pseudoconvex.

(2) $\li s_{\Omega}(z)=1$. Here $s_{\Omega}(z)$ is the squeezing function.

(3) $\li\frac{M_{\Omega}^C(z)}{M_{\Omega}^K(z)}=1$, where  $M_{\Omega}^K(z)$ and $M_{\Omega}^C(z)$ are respectively Kobayashi-Eisenman and Carath$\acute{e}$odory volume elements.

(4) $\li\frac{C_{\Omega}(z,v)}{K_{\Omega}(z,v)}=1$ and $\li\frac{B_{\Omega}(z,v)}{K_{\Omega}(z,v)}=\sqrt{3}$ uniformly for $0\neq v\in\mathbb{C}^2$, where $K_{\Omega},C_{\Omega},B_{\Omega}$ are respectively Kobayashi metric, Carath$\acute{e}$odory metric and Bergman metric.

(5) it admits a complete K$\ddot{a}$hler metric $g$ such that the following holds
\[
\lim_{z\rightarrow\partial\Omega}H(g)=-c
\]
for some constant $c>0$.

\end{thm}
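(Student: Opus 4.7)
The plan is to show that each of (2)--(5) implies (1); the reverse implications come from Theorem \ref{int} together with Klembeck's theorem, which asserts that on any smoothly bounded strongly pseudoconvex domain the Bergman metric $B_\Omega$ is itself a complete K\"ahler metric with $H(B_\Omega)\to -4/3$ at $\partial\Omega$, thereby realizing (5). The implication (2)$\Rightarrow$(1) is the theorem of Joo--Kim \cite{kim}, so the work is to prove (3)$\Rightarrow$(1), (4)$\Rightarrow$(1), and (5)$\Rightarrow$(1).

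The common strategy is a scaling argument by contradiction. Assume $\Omega$ is not strongly pseudoconvex, so that some $p\in \partial\Omega$ has finite type $2m\ge 4$. Choose normalizing coordinates at $p$ and apply Berteloot--Pinchuk polydisc scaling to obtain affine maps $T_n\colon \mathbb{C}^2 \to \mathbb{C}^2$ for which $T_n(\Omega)$ converges, in the local Hausdorff topology, to a polynomial model
\[
\hat\Omega_p = \{(w,z)\in \mathbb{C}^2 : \operatorname{Re}(w) + P(z,\bar z) < 0\},
\]
where $P$ is a non-harmonic subharmonic polynomial of degree exactly $2m$. The model $\hat\Omega_p$ is pseudoconvex, Kobayashi-hyperbolic, and of finite type $2m\ge 4$ at the origin; in particular it is \emph{not} biholomorphic to $\mathbb{B}^2$.

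For (3)$\Rightarrow$(1) and (4)$\Rightarrow$(1), the invariants $M^C/M^K$, $C/K$, $B/K$ are biholomorphic and continuous under normal convergence of hyperbolic domains, so the boundary hypothesis on $\Omega$ transfers to the origin of $\hat\Omega_p$, yielding $M^C_{\hat\Omega_p}(0)=M^K_{\hat\Omega_p}(0)$ in case (3), and $C_{\hat\Omega_p}(0,v)=K_{\hat\Omega_p}(0,v)$ together with $B_{\hat\Omega_p}(0,v)=\sqrt{3}\,K_{\hat\Omega_p}(0,v)$ for every $0\neq v$ in case (4). A direct computation on the polynomial model, exploiting the anisotropic automorphisms that dilate $w$ and $z$ at rates differing by a factor of $2m$, shows that the $v$-dependence of each of these invariants at the origin of $\hat\Omega_p$ is incompatible with the ``spherical'' values $1$ and $\sqrt{3}$ unless $P$ is a multiple of $|z|^2$, that is, unless $m=1$. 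Either contradiction rules out the existence of the weakly pseudoconvex point $p$.

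The main obstacle is (5)$\Rightarrow$(1), because the metric $g$ is auxiliary data on $\Omega$ and does not transport automatically to $\hat\Omega_p$. Following the approach of Bracci--Gaussier--Zimmer \cite{pin}, the idea is first to use the Ahlfors--Schwarz--Yau lemma, applied to the upper curvature bound $H(g)\le -c/2$ valid in a neighborhood of $\partial\Omega$, to sandwich $g$ between uniform multiples of the Kobayashi metric $K_\Omega$ near $\partial\Omega$. This uniform two-sided comparison will allow the pullbacks $T_n^{\ast} g$ to be controlled on compact subsets of $\hat\Omega_p$, so that a subsequential K\"ahler limit $\hat g$ exists on $\hat\Omega_p$, is complete, and has constant holomorphic sectional curvature $-c$. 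The Yau--Lu uniformization theorem then forces $(\hat\Omega_p,\hat g)$ to be isometric to the complex hyperbolic ball, whence $\hat\Omega_p$ is biholomorphic to $\mathbb{B}^2$, contradicting the finite type at the origin. The delicate step is the lower comparison $g\gtrsim K_\Omega$ near weakly pseudoconvex points; here the finite-type assumption in $\mathbb{C}^2$ is essential, and we expect to use the sharp Bergman-kernel and Kobayashi-metric estimates of Catlin to establish it.
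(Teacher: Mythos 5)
Your overall strategy — scale at a weakly pseudoconvex boundary point to produce a polynomial model $\widehat{\Omega}=\{\operatorname{Re}w+P(z)<0\}$ and transfer the boundary hypothesis to $\widehat{\Omega}$ — matches the paper, but several of your concrete steps diverge from (or fall short of) what the paper actually does, and at least two of them are genuine gaps. First, for $(3)\Rightarrow(1)$ and $(4)\Rightarrow(1)$, you propose a ``direct computation'' on the model showing the invariants cannot take the spherical values $1$ and $\sqrt{3}$ unless $P=c|z|^2$. There is no known closed form for $K_{\widehat\Omega}$, $C_{\widehat\Omega}$, $B_{\widehat\Omega}$, or the volume elements on a model with general subharmonic $P$ of degree $\geq 4$, and the anisotropic dilations $(z,w)\mapsto(tz,t^{m}w)$ merely give scaling identities, not values; so this step would not go through as written. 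The paper instead shows that the limiting identities $q_{\widehat\Omega}=1$, respectively $C_{\widehat\Omega}=K_{\widehat\Omega}$ together with $B_\infty=\sqrt{3}\,K_{\widehat\Omega}$ with $B_\infty$ K\"ahler, force $\widehat{\Omega}\cong\mathbb{B}^2$ by appealing to the abstract characterization theorems of \cite{ball} and \cite{stanton}; no explicit computation is needed. Note also that the Carath\'eodory quantities are only upper semicontinuous under the scaling (Lemma \ref{cara}, Remark \ref{rmk2}), so ``continuity under normal convergence'' of $M^C/M^K$ and $C/K$ is not free — it must be combined with the one-sided limit and the hypothesis $\to 1$.

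Second, and more importantly, your conclusion ``$\widehat{\Omega}$ biholomorphic to $\mathbb{B}^2$ contradicts finite type at the origin'' hides the single hardest step of the whole paper. A biholomorphism onto $\mathbb{B}^2$ does not a priori extend to $\partial\widehat{\Omega}$, and $\widehat{\Omega}$ is unbounded with a weakly pseudoconvex boundary point, so Fefferman-type extension does not apply directly. The paper proves this implication as Theorem \ref{sph}: starting from a covering map $\mathbb{B}^2\to\widehat{\Omega}$, it uses the Hopf lemma on a plurisubharmonic defining function, H\"older continuity of the boundary extension via Kobayashi metric comparison, Huang's extension theorem for CR maps between real-analytic hypersurfaces, and the structure of proper holomorphic maps to conclude that $\partial\widehat{\Omega}$ is spherical, hence $P_\infty=c_1|z|^2$ and $m=2$. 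This is not a one-line ``contradiction''; you would need an argument of comparable depth. Third, in $(5)\Rightarrow(1)$ you pass directly to a subsequential $C^\infty$ limit of the pullback metrics $T_n^{\ast}g$, but the hypothesis $\lim H(g)=-c$ gives no control on higher covariant derivatives of $R(g)$, so such a limit need not exist. The paper first replaces $g$ by a smoothed metric $h$ via the quasi-bounded-geometry construction of \cite[Theorem 7.1]{pin} (Definition \ref{qb}, Theorem \ref{bound}, Proposition \ref{kah}), which introduces an $\epsilon$-loss in curvature: the limit $h_\infty$ is then only $\epsilon$-pinched, not exactly constant, and the correct uniformization input is the pinched theorem quoted as Theorem \ref{berg}, applied on the universal cover, rather than the classical constant-curvature statement. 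The rest of your outline of $(5)\Rightarrow(1)$ — the two-sided Kobayashi comparison via the Ahlfors--Schwarz--Yau lemma and Catlin's finite-type estimates — matches Proposition \ref{compa}.
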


Since the Bergman metric is invariant under biholomorphic mappings, then we can derive the following proposition directly from (5) in Theorem \ref{thm1} by considering the pullback metric of covering space.
\begin{proposition}\label{cor1}
Let $\Omega\subset\mathbb{C}^2$ be a smoothly bounded  pseudoconvex domain of finite type, if $\Omega$ is biholomorphically equivalent to a quotient of the unit ball, then it is strongly pseudoconvex.
\end{proposition}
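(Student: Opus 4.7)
The plan is to reduce the proposition to the implication $(5) \Rightarrow (1)$ of Theorem~\ref{thm1}: it suffices to exhibit on $\Omega$ a complete K\"ahler metric whose holomorphic sectional curvature has a negative limit at the boundary. Write $\Omega$ biholomorphically as $\mathbb{B}^2/\Gamma$, where $\Gamma \subset \mathrm{Aut}(\mathbb{B}^2)$ acts freely and properly discontinuously, and let $\pi : \mathbb{B}^2 \to \Omega$ be the composition of the quotient map $\mathbb{B}^2 \to \mathbb{B}^2/\Gamma$ with the biholomorphism $\mathbb{B}^2/\Gamma \to \Omega$. Since the Bergman metric $g_{\mathbb{B}^2}$ on $\mathbb{B}^2$ is invariant under all automorphisms of $\mathbb{B}^2$, and in particular under $\Gamma$, it descends to a well-defined K\"ahler metric $g$ on $\Omega$ characterized by $\pi^{*}g = g_{\mathbb{B}^2}$; equivalently, $\pi$ is a local isometry between $(\mathbb{B}^2, g_{\mathbb{B}^2})$ and $(\Omega, g)$.

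Two properties of $g$ are immediate from the fact that $\pi$ is a covering and a local isometry. First, $g$ is complete on $\Omega$, because $g_{\mathbb{B}^2}$ is complete on $\mathbb{B}^2$ and completeness of a Riemannian metric descends along isometric coverings. Second, the holomorphic sectional curvature is pointwise preserved under local isometries, so $H(g)$ equals the constant holomorphic sectional curvature of the Bergman metric on $\mathbb{B}^2$, namely $-\tfrac{4}{n+1} = -\tfrac{4}{3}$ for $n = 2$. In particular
\[
\lim_{z \to \partial \Omega} H(g) \;=\; -\frac{4}{3},
\]
so condition $(5)$ of Theorem~\ref{thm1} holds with $c = 4/3 > 0$, and the theorem then concludes that $\Omega$ is strongly pseudoconvex.

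There is no genuine obstacle once Theorem~\ref{thm1} is in hand. The two verifications needed, namely the $\Gamma$-invariance of the Bergman metric on $\mathbb{B}^2$ (a direct consequence of the biholomorphic transformation law of the Bergman kernel) and the descent of completeness and pointwise curvature through a covering that is a local isometry, are standard. We emphasize that the metric $g$ constructed above is \emph{not} in general the Bergman metric of $\Omega$; it is the metric pushed down from the Bergman metric of $\mathbb{B}^2$ along the covering, and this flexibility in the choice of K\"ahler metric is exactly what condition $(5)$ affords.
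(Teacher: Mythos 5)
Your proof is correct and matches the approach the paper indicates: push the $\mathrm{Aut}(\mathbb{B}^2)$-invariant (hence $\Gamma$-invariant) Bergman metric of $\mathbb{B}^2$ down through the covering to obtain a complete K\"ahler metric on $\Omega$ with constant holomorphic sectional curvature $-4/3$, and then apply $(5)\Rightarrow(1)$ of Theorem~\ref{thm1}. Your explicit remark that the descended metric need not be the Bergman metric of $\Omega$ itself is a correct and useful clarification, and it is precisely the flexibility that makes condition $(5)$ applicable here.
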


Combining with \cite[Theorem A.2]{unif},  we obtain a similar result to the case of smoothly bounded pseudoconvex domains of finite type in $\mathbb{C}^2$ with real analytic boundary.
\begin{cor}
Let $\Omega\subset\mathbb{C}^2$ be a smoothly bounded  pseudoconvex domain of finite type with real analytic boundary, then it is biholomorphically equivalent to a quotient of the unit ball if and only if its boundary is spherical.
\end{cor}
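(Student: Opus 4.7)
The plan is to deduce the corollary by routing both directions through \cite[Theorem A.2]{unif}, which characterizes among smoothly bounded strongly pseudoconvex domains with real analytic boundary those that are biholomorphic to a ball quotient as precisely the ones whose boundary is spherical. Our contribution (Proposition~\ref{cor1}) supplies the strong pseudoconvexity that is not a priori available under the weaker finite-type assumption.

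For the forward implication, I would assume $\Omega$ is biholomorphic to a quotient of $\mathbb{B}^2$ and immediately apply Proposition~\ref{cor1} to conclude that $\Omega$ is strongly pseudoconvex. Combined with the real analyticity of $\partial\Omega$, this places $\Omega$ in the setting of \cite[Theorem A.2]{unif}, whose corresponding direction then yields that $\partial\Omega$ is spherical.

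For the reverse implication, the main point is that sphericity already forces strong pseudoconvexity, so that \cite[Theorem A.2]{unif} can be applied in the other direction. Indeed, at each boundary point $p$ sphericity provides a local biholomorphism of a neighborhood in $\mathbb{C}^2$ mapping a piece of $\partial\Omega$ onto an open piece of $\partial\mathbb{B}^2$. Because the signature of the Levi form is a CR-invariant up to a positive conformal factor and $\partial\mathbb{B}^2$ is strongly pseudoconvex, the Levi form of $\partial\Omega$ is positive definite at $p$. Letting $p$ range over $\partial\Omega$ we obtain strong pseudoconvexity globally, and \cite[Theorem A.2]{unif} then delivers the ball-quotient conclusion.

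The genuine technical content is concentrated in \cite[Theorem A.2]{unif}, which uses extension properties of local CR equivalences in the real analytic strongly pseudoconvex setting together with uniformization of spherical CR manifolds; the role of the present paper in this corollary is confined to Proposition~\ref{cor1}, which bridges the finite-type hypothesis and the strongly pseudoconvex framework required by \cite[Theorem A.2]{unif}. The only point that needs checking beyond this routing is that \cite[Theorem A.2]{unif} is stated as an ``if and only if''; otherwise the missing direction is classical.
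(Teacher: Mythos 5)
Your proposal is correct and follows the same route the paper intends: the paper merely says ``Combining with \cite[Theorem A.2]{unif}, we obtain\dots'', i.e.\ Proposition~\ref{cor1} supplies the strong pseudoconvexity needed to invoke the characterization of strongly pseudoconvex real-analytic domains that are ball quotients as those with spherical boundary. Your explicit observations---that the forward direction uses Proposition~\ref{cor1} to reach the strongly pseudoconvex setting, and that in the reverse direction sphericity already forces strong pseudoconvexity (via CR-invariance of the Levi form signature and the pseudoconvexity of $\Omega$) so \cite[Theorem A.2]{unif} again applies---accurately fill in what the paper leaves implicit, so the two arguments coincide in substance.
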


This paper is orgnized as follows. In Sect.\ref{sec2} we give the preliminaries. In Sect.\ref{sec3} we recall the construction of a scaling sequence, and prove the stability of intrinsic invariants under the scaling sequence. Finally,  we prove Theorem \ref{thm1} in Sect.\ref{sec4}.

\section{Preliminaries}\label{sec2}
\subsection{Notations}(1) For $z\in\mathbb{C}^n$, let $|z|$ be the standard Euclidean norm of $z$.

(2) Suppose $z\in\mathbb{C}^n$ and $r>0$, for $n\geq2$ then $B_r(z):=\left\{w\in\mathbb{C}^n:|w-z|<r\right\}$  and  $\mathbb{B}^n$ is denoted as the unit ball of $\mathbb{C}^n$. For $n=1$, we let $\Delta$  be the unit disc of $\mathbb{C}$.

(3) Given a domain $\Omega\subsetneq\mathbb{C}^n$ and $z\in\Omega$, we let
\[
\delta_{\Omega}(z):=\inf\left\{|z-w|:w\in\partial\Omega\right\}.
\]

(4) For a $C^1$-smooth function $f:\Omega\subset\mathbb{C}^n\rightarrow\mathbb{C}$ and $0\neq X\in\mathbb{C}^n$, we denote
\[
X(f)(z):=\frac{d}{dt}\bigg|_{t=0}f\left(z+tX\right)
\]
for $z\in\Omega$.

\subsection{The Invariant Metrics}Recall that a $Finsler\ metric$ in $\Omega$ is an upper continuous map $F:\Omega\times\mathbb{C}^n\rightarrow[0,\infty)$ such that $F(z,tX)=|t|F(z,X)$ for all $z\in\Omega,\ t\in\mathbb{C},\ X\in\mathbb{C}^n$. And the corresponding distance $d_F$ is defined by
\begin{align*}
d_F(x,y):=\inf\{L_F(\gamma):\gamma:[0,1]\rightarrow\Omega\ \operatorname{is\ a\ piecewise}\ C^1\operatorname{-smooth\ curve}\\
\operatorname{with}\ \gamma(0)=x,\gamma(1)=y\},
\end{align*}
where $L_F(\gamma):=\int_0^1F(\gamma(t),\dot{\gamma}(t))dt.$

For a domain $\Omega\subset\mathbb{C}^n$, there are three important invariant metrics on it: Kobayashi metric, Carath$\acute{\operatorname{e}}$odory metric and Bergman metric. Letting $z\in\Omega,0\neq X\in\mathbb{C}^n$,  we define the Kobayashi infinitesimal metric  by
\[
K_{\Omega}(z,X):=\inf\{|\xi|:\exists{f\in \operatorname{Hol}(\Delta,\Omega),\ \operatorname{with}\ f(0)=z,d(f)_0(\xi)=X}\},
\]
 and the Carath$\acute{\operatorname{e}}$odory infinitesimal metric is defined by 
\[
C_{\Omega}(z,X):=\sup\{|(df)_z(X)|:f\in\operatorname{Hol}(\Omega,\Delta),\ f(z)=0 \}.
\]

Moreover, the Bergman kernel and Bergman metric of $\Omega$ are respectively defined as follows
\begin{align*}
&\kappa_{\Omega}(z,z):=\sup\{|f(z)|^2:f\in\operatorname{Hol}(\Omega,\mathbb{C}),||f||_{L^2(\Omega)}\leq1\},\\
&B_{\Omega}(z,X):=\frac{\sup\{|X(f)(z)|:f\in\operatorname{Hol}(\Omega,\mathbb{C}),f(z)=0,||f||_{L^2(\Omega)}\leq1\}}{\kappa^{1/2}_{\Omega}(z,z)}
\end{align*}
for $z\in\Omega$ and $X\in\mathbb{C}^n$.

For the sake of convenience in the rest of the paper, we let $d_{\Omega}(\cdot,\cdot)$ denote the Kobayashi distance on $\Omega$.

\begin{rmk}\label{rmk}
Suppose $\Omega\subset\mathbb{C}^2$ is a smoothly  bounded pseudoconvex domain of finite type, then Theorem 1 in \cite{cat} implies that there exist constants $A',A\geq1$ such that 

\begin{align*}\label{comp}
\dfrac{1}{A}C_{\Omega}(z,X)\leq &K_{\Omega}(z,X)\leq AC_{\Omega}(z,X),\\
\dfrac{1}{A'}B_{\Omega}(z,X)\leq &K_{\Omega}(z,X)\leq A'B_{\Omega}(z,X)
\end{align*}
for $z\in\Omega,\  X\in\mathbb{C}^2$.
\end{rmk}

\begin{defn}
 The Kobayashi-Eisenman volume element of a domain $\Omega\subset\mathbb{C}^n$ is defined by 
\[
M_{\Omega}^K(z):=\inf\left\{1/|\operatorname{det}f'(0)|^2:f\in\operatorname{Hol}(\mathbb{B}^n,\Omega),f(0)=z\right\},
\]
and the Carath$\acute{\operatorname{e}}$odory volume element is defined by
\[
M_{\Omega}^C(z):=\sup\left\{|\operatorname{det}f'(z)|^{2}:f\in\operatorname{Hol}(\Omega,\mathbb{B}^n),f(z)=0\right\}.
\]

Note that the $quotient\ invariant$ $q_{\Omega}(z):=\frac{M_{\Omega}^C(z)}{M_{\Omega}^K(z)}\leq1$ for all $z\in\Omega$ and it is invariant under biholomorphic mappings.
\end{defn}

\begin{rmk}\label{rmk1}
Let $\Omega$ be a complex manifold with complex dimension $n$, and $M_{\Omega}^K(z)=M_{\Omega}^C(z)\neq0$ for some $z\in\Omega$, then \cite[Theorem 1]{ball} implies that $\Omega$ is biholomorphic to the unit ball $\mathbb{B}^n$.
\end{rmk}

\subsection{Holomorphic Sectional Curvature} Suppose $(M,J,g)$ is a \K\ manifold, and let $R(g)$ be the curvature tensor of $(M,g)$. The $holomorphic\ sectional$ $curvature$ of non-zero tangent vector $X\in T_zM$ is given by
\[
H(g)(X):=\dfrac{R(X,JX,X,JX)}{g(X,X)^2}.
\]

$(M,g)$ is said to have $pinched\ negative\ holomorphic\ sectional\ curvature$, if there exist constants $a,b>0$ such that
\[
-a\leq H(g)(X)\leq -b
\]
for $z\in M$ and non-zero vector $X\in T_zM$. It is worth noting that if $(M,g)$ has pinched\ negative\ holomorphic\ sectional\ curvature and $g$ is complete, then \cite[Theorem 2]{yau} implies that $\sqrt{g(z)(X,X)}$ and the Kobayashi metric $K_M(z,X)$ are bi-Lipschitz equivalent.

From (4) of Theorem \ref{int}, we know that the Bergman metric on bounded strongly pseudoconvex domains has pinched negative holomorphic sectional curvature near boundary. Furthermore, Fu \cite{fu} showed that for smoothly bounded Reinhardt domains of finite type in $\mathbb{C}^2$, the Bergman metric also has pinched negative holomorphic sectional curvature near boundary.

\subsection{Squeezing Function} Following \cite{yeu,squ}, the $squeezing$\ $function$ of a bounded domain $\Omega\subset\mathbb{C}^n$ is defined by 
\begin{align*}
s_{\Omega}(z):=\sup\{r:&\operatorname{there\ exists\ a\ 1-1\ holomorphic\ mapping} f:\Omega\rightarrow\mathbb{B}^n\\
 &\operatorname{with} f(z)=0 \operatorname{and} B_r(0)\subset f(\Omega)\}.
\end{align*}

A domain $\Omega$ is said to be $holomorphically$\ $homogeneous\ regular$ or $uniformly$ $squeezing$, if there exists a constant $c>0$ such that $s_{\Omega}(z)\geq c$ for all $z\in\Omega$. From (1) in Theorem \ref{int}, we know that bounded strongly pseudoconvex domains with $C^2$-smooth boundary are holomorphically\ homogeneous\ regular. 

\begin{remarkNoParens}\label{est}
Let $\Omega\subset\mathbb{C}^n$ be a bounded domain. According to {\cite{squ,nik}}, the squeezing function and the invariants introduced earlier satisfy the following inequalities
\begin{align*}
s_{\Omega}(z)K_{\Omega}(z,v)\leq &C_{\Omega}(z,v)\leq K_{\Omega}(z,v),\\
s^{2n}_{\Omega}(z)M_{\Omega}^K(z)\leq &M_{\Omega}^C(z)\leq {s^{-2n}_{\Omega}(z)}M_{\Omega}^K(z),\\
2-2\frac{n+3}{n+1}s^{-4n}_{\Omega}(z)\leq H(&B_{\Omega})(z,X)\leq 2-2\frac{n+3}{n+1}s^{4n}_{\Omega}(z),\\
\sqrt{n+1}s^{n+1}_{\Omega}(z)K_{\Omega}(z,v)\leq &B_{\Omega}(z,v)\leq\sqrt{n+1}s^{-(n+1)}_{\Omega}(z)K_{\Omega}(z,v)
\end{align*}
for $z\in\Omega$ and $v,X\in\mathbb{C}^n$.
\end{remarkNoParens}

\section{Pseudoconvex domain of finite type in $\mathbb{C}^2$}\label{sec3}
In this section, we construct a scaling sequence with respect to a boundary point of $\Omega\subset\mathbb{C}^2$, and prove some properties of the invariants associated with the sequence. We always assume that this domain is of finite type at most $m$. Refering to \cite{gro,ver} for more details.

\subsection{Scaling Sequence}\label{sec3.1} Before constructing the scaling sequence, we firstly recall the concepts of $normal\ set$-$convergence$ and convergence of holomorphic mappings  introduced in \cite{kra}.  
\begin{defn}\label{converge}
For each $j=1,2,\cdots$, we suppose $\Omega_j$ is a domain in $\mathbb{C}^n$. The sequence $\left\{\Omega_j\right\}$ is said to $converge\ normally$ to a domain $\Omega'\subset\mathbb{C}^n$ if 

(1) For any compact set $K$, if there exists a constant $k$ such that $K$ is contained in the interior of $\bigcap\limits_{i>k}\Omega_i$, then $K\subset\Omega'$.

(2) For any compact set $K'\subset\Omega'$, there exists a constant $k>0$ such that $K'\subset\bigcap\limits_{i>k}\Omega_i$. 
\end{defn}

\begin{propositionNoParens}[{\cite[Proposition 9.2.3]{kra}}]\label{map}
Suppose $\Omega_j\subset\mathbb{C}^n$ converges normally to a domain $\Omega'$, then

(1) Suppose $\{f_j:\Omega_j\rightarrow\Omega_1\}$ is a sequence of holomorphic mappings converges uniformly on compact subsets of $\Omega'$, then its limit is a holomorphic mapping from $\Omega'$ to the closure of $\Omega_1$.

(2) Suppose $\{g_j:\Omega_2\rightarrow\Omega_j\}$ is a sequence of holomorphic mappings converges uniformly on compact subsets of $\Omega_2$, if $\Omega'$ is pseudoconvex and there exists a point $q\in\Omega_2$ such that $|det(dg_j)_q|>c$ holds for each $j$ with some constant $c>0$, then $g(z):=\lim\limits_{j\rightarrow\infty}g_j(z):\Omega_2\rightarrow\Omega'$ is a holomorphic mapping.  

\end{propositionNoParens}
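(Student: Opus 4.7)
My plan is to reduce both parts to the Weierstrass convergence theorem (locally uniform limits of holomorphic maps are holomorphic) plus Cauchy estimates, and then to invoke the pseudoconvexity of $\Omega'$ to control the image in part (2). For part (1), Definition \ref{converge}(2) guarantees every compact $K' \subset \Omega'$ is contained in $\Omega_j$ for $j$ large, so the $f_j$ are eventually defined on $K'$ and the Weierstrass theorem produces a holomorphic limit $f : \Omega' \to \mathbb{C}^n$; since $f_j(z) \in \Omega_1$ for every admissible $z$, passing to the pointwise limit places $f(z) \in \overline{\Omega_1}$.

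For part (2), the same Weierstrass step makes $g := \lim g_j$ holomorphic on $\Omega_2$. Cauchy estimates on a small polydisc around $q$ give $|\det(dg)_q| \geq c > 0$, so $g$ is a local biholomorphism at $q$. Picking a polydisc $P \ni q$ on which $g$ is biholomorphic, the uniform convergence of $g_j$ to $g$ on $\overline{P}$ shows that $g_j|_P$ is also biholomorphic for large $j$, and any compact $K \subset g(P)$ is eventually contained in $g_j(P) \subset \Omega_j$. By Definition \ref{converge}(1) this forces $K \subset \Omega'$, and exhausting $g(P)$ by such compacta yields $g(P) \subset \Omega'$; in particular $g(q) \in \Omega'$. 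To promote this to $g(\Omega_2) \subset \Omega'$, consider the nonempty open set $W := g^{-1}(\Omega') \subset \Omega_2$; since $\Omega_2$ is connected, it suffices to show $W$ is closed in $\Omega_2$. Here pseudoconvexity enters: $\phi := -\log \delta_{\Omega'}$ is plurisubharmonic on $\Omega'$, and $\phi \circ g$ is plurisubharmonic on $W$. If $W \neq \Omega_2$, then at any boundary point $z_0 \in \partial W \cap \Omega_2$ one would have $g(z_0) \in \partial \Omega'$ and $\phi \circ g \to +\infty$ along paths in $W$ approaching $z_0$. I plan to derive a contradiction via a Kontinuit\"atssatz argument on a small analytic disc through $z_0$ whose image lies in $\overline{\Omega'}$ yet would have to touch $\partial \Omega'$, an impossibility for the pseudoconvex $\Omega'$.

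I expect this final pseudoconvexity step to be the main obstacle; the preceding arguments are routine applications of Weierstrass convergence, Cauchy estimates, and the definition of normal convergence. Ruling out tangential contact of $g(\Omega_2)$ with $\partial \Omega'$ is delicate precisely because $g$ is only known to be non-degenerate at the single point $q$, so standard open-mapping arguments do not directly extend away from a neighborhood of $q$.
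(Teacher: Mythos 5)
The paper does not contain its own proof of this statement; it is cited verbatim from \cite[Proposition 9.2.3]{kra}, so there is no in-paper argument to compare against. Evaluated on its own, your proposal is sound and essentially complete for part (1) and for the ``local'' half of part (2) --- Weierstrass convergence gives the holomorphic limit, Cauchy estimates give $|\det(dg)_q|\geq c>0$, and the Hurwitz/degree argument on a small polydisc $P\ni q$ together with clause (1) of normal convergence correctly yields $g(P)\subset\Omega'$. (One small point: to invoke Definition~\ref{converge}(1) you need $K$ in the \emph{interior} of $\bigcap_{j>k}\Omega_j$; this is handled by enlarging $K$ slightly inside $g(P)$ before applying the Rouch\'e--Hurwitz step, but it should be said.)

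The gap is exactly where you flag it, and it is a real one: the Kontinuit\"atssatz step is not carried out, and the sketch as stated would not close. You observe that $\phi\circ g=-\log\delta_{\Omega'}\circ g$ is plurisubharmonic on $W=g^{-1}(\Omega')$ and blows up at a boundary point $z_0\in\partial W\cap\Omega_2$, and hope to derive a contradiction --- but a subharmonic function on a punctured disc \emph{can} blow up at the puncture (e.g.\ $-\log|\zeta|$), so this alone is not contradictory. A key observation that is missing, and that substantially tames the problem, is that your Hurwitz argument is not special to $q$: it works verbatim at any $z$ with $\det(dg)_z\neq0$, since then $|\det(dg_j)_z|$ is bounded below for $j$ large. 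Hence $g(\Omega_2\setminus\Sigma)\subset\Omega'$, where $\Sigma:=\{\det(dg)=0\}$ is a proper analytic subvariety (by the identity theorem, using $\det(dg)_q\neq0$). This shows $\Omega_2\setminus W\subset\Sigma$ is a closed subset of a thin analytic set, and that $g(\Omega_2)\subset\overline{\Omega'}$. Only with this localization does a continuity-principle argument become tractable --- one restricts to a one-dimensional slice through a hypothetical bad point $z_0\in\Sigma$ meeting $\Sigma$ discretely, and applies the Kontinuit\"atssatz with a family of analytic discs in that slice. Even then, arranging the family so that the boundary circles avoid $\Sigma$ throughout and so that the initial disc lies in $\Omega'$ is a genuinely delicate matter (a naive sweep from a disc missing $z_0$ to one containing it must cross the bad configuration), and you should not present this as routine: it is the heart of part (2), and the proposal currently stops short of it.
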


Now we construct the scaling sequence with respect to a boundary point. Let $\Omega\subset\mathbb{C}^2$ be a smoothly bounded pseudoconvex domain of finite type and $p\in\partial\Omega$ be a boundary point. We may assume that $\Omega=\{(z,w)\in\mathbb{C}^2:\rho(z,w)<0\}$ with a smooth function $\rho(z,w)$ and $p=(0,0)$. Then \cite[Section 1]{hom} implies that after changing of coordinates, we can suppose that
\[
\rho(z,w)=\operatorname{Re}w+H_m(z)+o\left(|z|^{m+1}+|z||w|	\right)
\]
in a neighbourhood of $p$. Here $H_m(z):\mathbb{C}\rightarrow\mathbb{R}$ is a homogeneous polynomial of degree $m$, subharmonic and without harmonic terms. 

Suppose $\{p_n\}_{n\in\mathbb{N}}\subset\Omega$ converges to $p$, and for each $n$ we consider constant $\epsilon_n>0$ such that $\left(p_n^{(1)},p_n^{(2)}+\epsilon_n\right)\in\partial\Omega$ with $p_n=\left(p_n^{(1)},p_n^{(2)}\right)$. Let 
\[
\psi_n^{-1}(z,w):=\left(p_n^{(1)}+z,p_n^{(2)}+\epsilon_n+d_{n,0}w+\sum_{k=2}^md_{n,k}z^k\right)
\]
be  an automorphism of $\mathbb{C}^2$, where $d_{n,k}$ are chosen in the way such that 
\[
\rho\circ\psi_n^{-1}(z,w)=\operatorname{Re}w+P_n(z)+o\left(|z|^{m+1}+|z||w|\right).
\]
Here $P_n(z):\mathbb{C}\rightarrow\mathbb{R}$ is a subharmonic polynomial without harmonic terms and $P_n(0)=0$ with degree $m$. We select constant $\tau_n>0$ such that 
\[
||P_n(\tau_n\cdot)||=\epsilon_n,
\]
where $||\cdot||$ denote the norm in the space of polynomials with degree at most $m$. 

For each $n\geq1$, we define the following mapping 
\[
\delta_n(z,w):=\left(\frac{z}{\tau_n},\frac{w}{\epsilon_n}\right)
\]
and $\alpha_n:=\delta_n\circ\psi_n$, which is an automorphism of $\mathbb{C}^2$. Note that if we let $\Omega_n=\alpha_n(\Omega)$ for each $n=1,2,\cdots$, then $\Omega_n$ converges to $\widehat{\Omega}:=\left\{(z,w)\in\mathbb{C}^2:\operatorname{Re}w+P_{\infty}(z)<0\right\}$ in the sense of Definition \ref{converge}. Here $P_{\infty}(z):\mathbb{C}\rightarrow\mathbb{R}$ is a real-valued subharmonic polynomial without harmonic terms and its degree is $m$. Moreover, we have 
\[
q_n:=\alpha_n(p_n)=\left(0,-d_{n,0}^{-1}\right)\rightarrow(0,-1)=q_{\infty},
\]
and the domain $\widehat{\Omega}$ is called a $model\ domain$.

\subsection{Stability of Intrinsic Invariants}
Now we prove the stability of intrinsic invariants under the scaling sequence. We firstly recall the stability of Kobayashi metrics as follows.

\begin{lemmaNoParens}[{\cite[Lemma 5.2]{ver}}]\label{stability}
For $(z,v)\in\widehat{\Omega}\times\mathbb{C}^2$, we have 
\begin{equation}\label{stab}
\lim\limits_{n\rightarrow\infty}K_{\Omega_n}(z,v)=K_{\widehat{\Omega}}(z,v)
\end{equation}
and the convergence is uniform on compact subsets of $\widehat{\Omega}\times\mathbb{C}^2$.
\end{lemmaNoParens}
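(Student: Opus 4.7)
The plan is to prove the two one-sided inequalities separately, then upgrade pointwise convergence to uniform convergence on compact sets. For the upper bound $\limsup_{n\to\infty} K_{\Omega_n}(z,v)\le K_{\widehat{\Omega}}(z,v)$, I would fix $\varepsilon>0$ and pick an almost-extremal disc $\varphi\in\mathrm{Hol}(\Delta,\widehat{\Omega})$ with $\varphi(0)=z$ and $d\varphi_0(\xi)=v$ for some $\xi\in\mathbb{C}$ satisfying $|\xi|\le K_{\widehat{\Omega}}(z,v)+\varepsilon$. Composing with the dilation $\zeta\mapsto r\zeta$ for $r\in(0,1)$ close to $1$, the rescaled map $\varphi_r$ has image in a compact subset of $\widehat{\Omega}$. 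By condition (2) of Definition \ref{converge}, this compact set is contained in $\Omega_n$ for all large $n$, so $\varphi_r$ is a legitimate competitor for $K_{\Omega_n}(z,v)$ yielding $K_{\Omega_n}(z,v)\le |\xi|/r$. Letting $n\to\infty$, then $r\to 1^-$, then $\varepsilon\to 0^+$ gives the bound.

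For the reverse inequality $\liminf_{n\to\infty}K_{\Omega_n}(z,v)\ge K_{\widehat{\Omega}}(z,v)$, for each $n$ I pick $\varphi_n\in\mathrm{Hol}(\Delta,\Omega_n)$ with $\varphi_n(0)=z$ and $d(\varphi_n)_0(\xi_n)=v$, where $|\xi_n|\le K_{\Omega_n}(z,v)+1/n$. The upper bound established above gives a uniform control on $|\xi_n|$, so along a subsequence $\xi_n\to\xi_\infty$, with $\xi_\infty\ne 0$ whenever $v\ne 0$. The main obstacle is that the domains $\Omega_n$ are unbounded in $\mathbb{C}^2$, so Montel's theorem does not apply to $\{\varphi_n\}$ directly. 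I would overcome this using tautness of the model domain $\widehat{\Omega}$: for model domains of finite type in $\mathbb{C}^2$, closed Kobayashi balls around $z$ are compact in $\widehat{\Omega}$. Combined with the distance-decreasing property $d_{\Omega_n}(\varphi_n(\zeta),z)\le d_\Delta(\zeta,0)$ and the upper-bound part applied to the integrated metric (which forces $d_{\Omega_n}\to d_{\widehat{\Omega}}$ locally from above), one concludes that for every compact $L\subset\Delta$ the union $\bigcup_n\varphi_n(L)$ is relatively compact in $\mathbb{C}^2$. Montel's theorem then provides a further subsequence converging uniformly on compacta to some $\varphi:\Delta\to\mathbb{C}^2$.

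Since $\varphi(0)=z\in\widehat{\Omega}$ and $d\varphi_0(\xi_\infty)=v$ with $\xi_\infty\ne 0$, Proposition \ref{map}(2) (taking $\Omega_2=\Delta$ and using pseudoconvexity of $\widehat{\Omega}$) upgrades the a priori limit $\varphi:\Delta\to\overline{\widehat{\Omega}}$ to $\varphi:\Delta\to\widehat{\Omega}$. Hence $\varphi$ is a competitor for $K_{\widehat{\Omega}}(z,v)$, giving $K_{\widehat{\Omega}}(z,v)\le|\xi_\infty|\le\liminf_n K_{\Omega_n}(z,v)$. For the uniformity on compact subsets of $\widehat{\Omega}\times\mathbb{C}^2$, I would use homogeneity of $K_{\Omega_n}(z,\cdot)$ to reduce to compact sets in $\widehat{\Omega}\times S^3$, combine the established pointwise convergence with the uniform upper bound and upper semicontinuity of Kobayashi metrics, and invoke the continuity of $K_{\widehat{\Omega}}$ on model domains together with a standard Dini-type argument to promote pointwise to uniform convergence. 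The subtlest step throughout is the normality of $\{\varphi_n\}$, which is precisely where the finite-type/taut structure of $\widehat{\Omega}$ is indispensable.
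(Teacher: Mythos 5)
The paper does not prove this lemma; it is quoted verbatim from \cite[Lemma 5.2]{ver}, so there is no internal proof to compare against. Evaluating your argument on its own terms: the upper bound $\limsup_n K_{\Omega_n}(z,v)\le K_{\widehat\Omega}(z,v)$ via a shrunk competitor disc is correct. The lower bound, however, has a genuine gap at the normal-families step. To apply Montel you must show that $\bigcup_n\varphi_n(L)$ is bounded in $\mathbb{C}^2$ for each compact $L\subset\Delta$. Your justification — that $d_{\Omega_n}(\varphi_n(\zeta),z)\le d_\Delta(\zeta,0)\le C$ together with ``$d_{\Omega_n}\to d_{\widehat\Omega}$ locally from above'' yields relative compactness — goes the wrong way. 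The upper estimate $K_{\Omega_n}\lesssim K_{\widehat\Omega}$ gives $d_{\Omega_n}\lesssim d_{\widehat\Omega}$, which means $B_{\Omega_n}(z,C)$ is \emph{large} (it contains a $\widehat\Omega$-ball); it gives no upper bound on the size of $B_{\Omega_n}(z,C)$. To conclude that $B_{\Omega_n}(z,C)$ stays in a fixed bounded set one needs a \emph{lower} bound on $K_{\Omega_n}$, uniform in $n$ — which is essentially half of the assertion being proved and must come from an a priori estimate. That estimate is precisely what the finite-type analysis (Catlin's lower bounds for $K_\Omega$, transferred under the scaling maps $\alpha_n$) supplies; it is the content of \cite[Lemma 5.6]{ver}, which the surrounding paper invokes in exactly this role in Lemmas \ref{kob} and \ref{quo}. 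Invoking ``tautness of $\widehat\Omega$'' does not fill the gap either, since the discs $\varphi_n$ map into the varying domains $\Omega_n$, not into $\widehat\Omega$, and tautness of the limit domain does not by itself control a sequence with moving targets.

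Two smaller issues: Proposition \ref{map}(2) is stated for equidimensional maps and hinges on $|\det(dg_j)_q|>c$, which does not make sense for $\varphi_n:\Delta\to\Omega_n\subset\mathbb{C}^2$; the correct way to rule out the limit disc degenerating into $\partial\widehat\Omega$ is the standard plurisubharmonic-defining-function/maximum-principle argument once you have $\varphi(0)=z\in\widehat\Omega$ and $\varphi(\Delta)\subset\overline{\widehat\Omega}$. And a ``Dini-type argument'' does not promote pointwise to locally uniform convergence here, since there is no monotonicity in $n$; the standard route is a compactness/contradiction argument on a putative bad sequence $(z_n,v_n)\to(z_0,v_0)$ in $\widehat\Omega\times S^3$, which again leans on the same uniform two-sided bounds on $K_{\Omega_n}$.
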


In the case of Carath$\acute{\operatorname{e}}$odory metrics, the family $\left\{C_{\Omega_n}\right\}$ is upper semicontinuous under the scaling sequence.

\begin{lemma}\label{cara}
For $(z,v)\in\widehat{\Omega}\times\mathbb{C}^2$, then
\[
\limsup\limits_{n\rightarrow\infty}C_{\Omega_n}(z,v)\leq C_{\widehat{\Omega}}(z,v).
\]
\end{lemma}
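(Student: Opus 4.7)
The plan is a standard normal families argument, exploiting the fact that every competitor in the definition of $C_{\Omega_n}$ is automatically bounded by $1$ in modulus. Set $L:=\limsup_{n\to\infty}C_{\Omega_n}(z,v)$; the case $L=0$ is trivial, so I may assume $L>0$. After passing to a subsequence along which $C_{\Omega_n}(z,v)\to L$, I pick near-extremal $f_n\in\operatorname{Hol}(\Omega_n,\Delta)$ with $f_n(z)=0$ and $|(df_n)_z(v)|\geq C_{\Omega_n}(z,v)-1/n$. These functions are well defined for all large $n$, since by part (2) of Definition \ref{converge} the point $z\in\widehat{\Omega}$ lies in $\Omega_n$ eventually.

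The key step is to extract a locally uniformly convergent subsequence on $\widehat{\Omega}$. Given any compact $K\subset\widehat{\Omega}$, the same part of Definition \ref{converge} puts $K\subset\Omega_n$ for all large $n$, and the uniform bound $|f_n|\leq 1$ makes $\{f_n|_K\}$ a normal family. Montel's theorem together with a diagonal extraction across an exhaustion of $\widehat{\Omega}$ by compact sets then yields a holomorphic limit $f\colon\widehat{\Omega}\to\overline{\Delta}$ with $f(z)=0$ and, by the Cauchy estimates applied on a small polydisc around $z$, $|(df)_z(v)|=L$.

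It remains to upgrade $f$ to an admissible competitor in the definition of $C_{\widehat{\Omega}}(z,v)$, namely to show $f(\widehat{\Omega})\subset\Delta$. Because $L>0$ the differential $(df)_z$ is nonzero, so $f$ is nonconstant; the maximum modulus principle then forces $|f|<1$ on $\widehat{\Omega}$. Consequently $C_{\widehat{\Omega}}(z,v)\geq|(df)_z(v)|=L$, which is the desired upper semicontinuity.

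The main conceptual point, which is the only issue specific to this setting rather than to the classical one-domain version, is the bookkeeping around varying domains: the $f_n$ have different natural domains, so Montel's theorem cannot be applied on $\widehat{\Omega}$ directly. The definition of normal convergence is tailored precisely to bypass this, since every compact subset of $\widehat{\Omega}$ is eventually contained in $\Omega_n$, so the uniform bound $|f_n|\leq 1$ transfers onto any such compact set for $n$ large enough and the extraction then proceeds as in the fixed-domain case.
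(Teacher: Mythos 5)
Your proof is correct and follows essentially the same normal-families route as the paper, which passes to extremal maps $f_n$ for $C_{\Omega_n}(z,v)$ and applies Montel. You use near-extremal maps instead of extremal ones and spell out two details the paper leaves implicit, namely the diagonal extraction across a compact exhaustion of $\widehat{\Omega}$ permitted by normal convergence, and the maximum-modulus argument ruling out the limit landing on $\partial\Delta$, but these are refinements of the same argument rather than a different approach.
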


\begin{proof}
Fix $(z,v)\in\widehat{\Omega}\times\mathbb{C}^2$, we may assume that $z\in\Omega_n$ for each $n\geq1$. Suppose that $f_n(w):\Omega_n\rightarrow\Delta$ is an extremal mapping for $C_{\Omega_n}(z,v)$, then we obtain that $C_{\Omega_n}(z,v)=|(df_n)_z(v)|$ and $f_n(z)=0$. Montel's theorem implies that after passing to a subsequence, $f(w):=\lim\limits_{n\rightarrow\infty}f_n(w):\widehat{\Omega}\rightarrow\Delta$ is holomorphic with $f(z)=0$ and $(df)_z(v)=\lim\limits_{n\rightarrow\infty}(df_n)_z(v)$. And it implies that 
\[
\limsup\limits_{n\rightarrow\infty}C_{\Omega_n}(z,v)\leq C_{\widehat{\Omega}}(z,v),
\]
which completes the proof.
\end{proof}

Inspired by \cite[Lemma 5.7]{ver}, we establish the following result concerning the stability of Kobayashi distances under the scaling sequence.

\begin{lemma}\label{kob}
Let $\Omega_n,\widehat{\Omega}$ be as above, then
\[
\lim\limits_{n\rightarrow\infty}d_{\Omega_n}(z,w)=d_{\widehat{\Omega}}(z,w)
\]
holds for $(z,w)\in\widehat{\Omega}\times\widehat{\Omega}$. And the convergence is uniform on compact sets of $\widehat{\Omega}\times\widehat{\Omega}$.
\end{lemma}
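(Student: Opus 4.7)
The plan is to prove the two one-sided inequalities
\[
\limsup_{n\rightarrow\infty}d_{\Omega_n}(z,w)\leq d_{\widehat{\Omega}}(z,w)\quad\text{and}\quad \liminf_{n\rightarrow\infty}d_{\Omega_n}(z,w)\geq d_{\widehat{\Omega}}(z,w),
\]
and upgrade the resulting pointwise convergence to uniform convergence on compact subsets of $\widehat{\Omega}\times\widehat{\Omega}$ at the end by an equicontinuity argument: the triangle inequality together with the first of the two inequalities applied to short segments controls $|d_{\Omega_n}(z,w)-d_{\Omega_n}(z',w')|$ uniformly when $(z',w')$ is close to $(z,w)$.

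For the upper bound, given $\epsilon>0$ I would pick a piecewise $C^1$ curve $\gamma:[0,1]\rightarrow\widehat{\Omega}$ joining $z$ and $w$ with $L_{K_{\widehat{\Omega}}}(\gamma)\leq d_{\widehat{\Omega}}(z,w)+\epsilon$. Its image is compact in $\widehat{\Omega}$, so by Definition \ref{converge} it lies in $\Omega_n$ for all sufficiently large $n$, and Lemma \ref{stability} (uniform convergence of $K_{\Omega_n}$ to $K_{\widehat{\Omega}}$ on compact subsets of $\widehat{\Omega}\times\mathbb{C}^2$) gives $L_{K_{\Omega_n}}(\gamma)\rightarrow L_{K_{\widehat{\Omega}}}(\gamma)$, whence $\limsup_{n\to\infty} d_{\Omega_n}(z,w)\leq d_{\widehat{\Omega}}(z,w)+\epsilon$.

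For the lower bound, I would fix for each $n$ a piecewise $C^1$ curve $\gamma_n:[0,1]\rightarrow\Omega_n$ joining $z$ and $w$ with $L_{K_{\Omega_n}}(\gamma_n)\leq d_{\Omega_n}(z,w)+1/n$. The crux of the proof is to show that the curves $\gamma_n$ can be confined to a single compact subset $K\subset\widehat{\Omega}$ for all large $n$. Granting this, Lemma \ref{stability} provides $K_{\Omega_n}\geq(1-\epsilon)K_{\widehat{\Omega}}$ on $K\times\mathbb{C}^2$ once $n$ is large, and therefore
\[
d_{\Omega_n}(z,w)+\tfrac{1}{n}\geq L_{K_{\Omega_n}}(\gamma_n)\geq(1-\epsilon)L_{K_{\widehat{\Omega}}}(\gamma_n)\geq(1-\epsilon)d_{\widehat{\Omega}}(z,w),
\]
which in the limit $n\rightarrow\infty$, $\epsilon\rightarrow 0$ furnishes the lower bound.

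The principal obstacle is the confinement of the almost-geodesics $\gamma_n$ to a compact subset of $\widehat{\Omega}$. By the upper bound their lengths $L_{K_{\Omega_n}}(\gamma_n)$ are bounded by some $M$ independent of $n$. To rule out excursions toward $\partial\widehat{\Omega}$, I would invoke the Catlin-type lower estimates of the Kobayashi metric at a boundary point of finite type in $\mathbb{C}^2$ from \cite{cat}; because the scalings $\alpha_n$ are the standard finite-type scalings of \cite{hom} and preserve the defining polynomial at the boundary in the limit, these estimates transfer to $\Omega_n$ with constants independent of $n$. Schematically, near any $q\in\partial\widehat{\Omega}$, $K_{\Omega_n}(\zeta,v)$ blows up at least like $|v|/\delta_{\Omega_n}(\zeta)^{1/m}$ uniformly in $n$, so any segment of $\gamma_n$ approaching $\partial\widehat{\Omega}$ would contribute arbitrarily large length, contradicting the bound $M$; the possibility of escape to infinity is excluded by a similar uniform estimate together with the tautness of the model domain $\widehat{\Omega}$. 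Once the curves are trapped in a fixed compact $K\subset\widehat{\Omega}$, the argument above closes the proof.
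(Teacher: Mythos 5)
Your overall strategy matches the paper's: the upper bound by transporting a near-minimizing curve for $d_{\widehat{\Omega}}(z,w)$ into $\Omega_n$ and applying Lemma~\ref{stability}, and the lower bound by confining almost-geodesics $\gamma_n\subset\Omega_n$ inside a fixed compact subset of $\widehat{\Omega}$ and then comparing lengths. (The paper phrases this as a contradiction argument with a sequence $(z_j,w_j)$, which builds the uniformity in directly, whereas you prove pointwise convergence and plan to upgrade it by equicontinuity; that reshuffling is harmless.)

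The genuine gap is in the confinement step. You assert that Catlin's lower bounds on $K_{\Omega}$ ``transfer to $\Omega_n$ with constants independent of $n$'' because the scalings preserve the defining polynomial in the limit, and that escape to infinity is ruled out by ``tautness of $\widehat{\Omega}$.'' Neither claim is immediate. The maps $\alpha_n=\delta_n\circ\psi_n$ involve the anisotropic dilations $\delta_n(z,w)=(z/\tau_n,w/\epsilon_n)$ with $\tau_n,\epsilon_n\to 0$ at different rates, so a uniform (in $n$) lower bound of the form $K_{\Omega_n}(\zeta,v)\gtrsim |v|/\delta_{\Omega_n}(\zeta)^{1/m}$ near $\partial\Omega_n$ is not a formal consequence of the estimate on $\Omega$; the Euclidean distance to $\partial\Omega_n$ does not transform by a single scalar. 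Likewise, tautness or completeness of $\widehat{\Omega}$ alone cannot control the curves $\gamma_n$, since they live in $\Omega_n$, which is not contained in $\widehat{\Omega}$; what is actually needed is a uniform coercivity statement for the whole family $\{\Omega_n\}$. This is exactly the content of the cited Lemmas~5.6 and 5.7 of~\cite{ver}: $d_{\Omega_n}(q_n,z)$ is uniformly bounded for $z$ in a fixed compact of $\widehat{\Omega}$, and the Kobayashi balls $B_{\Omega_n}(q_n,R)$ are eventually contained in a fixed compact subset of $\widehat{\Omega}$. Once you have that, the triangle inequality traps $\gamma_n$ in $B_{\Omega_n}(q_n,r'+M)\subset K$ for $n$ large, and your $(1-\epsilon)$--comparison argument closes. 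So either cite those lemmas or actually prove the uniform coercivity; as written, it is asserted rather than established.
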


\begin{proof}
In the proof of this lemma, we denote $B_{\Omega}(z,r)$ the Kobayashi ball in $\Omega$ with radius $r$ centered at $z$. For a contradiction, we may assume that there exists a compact set $K\subset\widehat{\Omega}\times\widehat{\Omega}$ and $(z_j,w_j)\in K$ such that 
\[
\left|d_{\Omega_j}(z_j,w_j)-d_{\widehat{\Omega}}(z_j,w_j)\right|>\epsilon_0
\]
with some $\epsilon_0>0$ for large $j$. We also assume that $z_j\rightarrow z_0,w_j\rightarrow w_0$ and all $z_j,w_j,z_0,w_0$ lie in $ K'$ for some compact set $K'$ of $\widehat{\Omega}$. Note that for sufficiently large $j$, we have
\begin{equation}\label{contra}
\left|d_{\Omega_j}(z_j,w_j)-d_{\widehat{\Omega}}(z_0,w_0)\right|>\epsilon_0/2.
\end{equation}

Similar to the proof of inequality (5.15) in \cite[Lemma 5.7]{ver}, we obtain that 
\[
d_{\Omega_j}(z_j,w_j)<d_{\widehat{\Omega}}(z_0,w_0)+\epsilon
\]
for fixed $\epsilon>0$ sufficiently small and $j$ large.

Now we prove that for each $r>0$ then
\[
B_{\widehat{\Omega}}(z_0,R-r)\subset B_{\Omega_j}(z_j,R)
\]
with all $R>0$ and all $j$ large. To see this, it is enough to show that for each $s\in\widehat{\Omega}$
\[
\limsup_{j\rightarrow\infty}d_{\Omega_j}(z_j,s)\leq d_{\widehat{\Omega}}(z_0,s).
\]
Now we fix $s\in\widehat{\Omega}$ and choose a piecewise $C^1$-smooth curve $\gamma:[0,1]\rightarrow\widehat{\Omega}$, connecting $z_0$ and $s$, such that $L_{K_{\widehat{\Omega}}}(\gamma)<d_{\widehat{\Omega}}(z_0,s)+r/2$. Let $\gamma_j(t):[0,1]\rightarrow\Omega_j$ as 
\[
\gamma_j(t):=\gamma(t)+(z_j-z_0)(1-t),
\]
then $\gamma_j(0)=z_j$ and $\gamma_j(1)=s$. Moreover, we know that $\gamma_j\rightarrow\gamma$ and $\gamma_j'\rightarrow\gamma'$ uniformly on $[0,1]$. Then Lemma \ref{stability} implies that 
\[
d_{\Omega_j}(z_j,s)\leq\int_0^1K_{\Omega_j}\left(\gamma_j(t),\gamma_j'(t)\right)dt\leq\int_0^1K_{\widehat{\Omega}}\left(\gamma(t),\gamma'(t)\right)dt+r/2<d_{\widehat{\Omega}}(z_0,s)+r
\]
for all $j$ large and $r$ small. 

The completeness of $K_{\widehat{\Omega}}$ on $\widehat{\Omega}$ implies that there exist constants $r>0,r'>0$ such that 
\[
K'\subset B_{\widehat{\Omega}}(z_0,r)\subset B_{\Omega_j}(z_j,r')
\]
for all $j$ large. Then \cite[Lemma 5.7]{ver} deduces that for all $w\in B_{\Omega_j}(z_j,r')$ 
\[
d_{\Omega_j}(w,q_j)\leq d_{\Omega_j}(z_j,w)+d_{\Omega_j}(z_j,q_j)\leq d_{\Omega_j}(z_j,w)+M
\]
with some constant $M>0$, independently of $w$ and $j$. Hence $B_{\Omega_j}(z_j,r')\subset B_{\Omega_j}(q_j,r'+M)$ and $B_{\Omega_j}(z_j,r')$ is compactly contained in $\widehat{\Omega}$ for large $j$ by \cite[Lemma 5.6]{ver}.

 Following a similar proof of inequality (5.16) in \cite[Lemma 5.7]{ver}, we obtain 
\[
d_{\widehat{\Omega}}(z_0,w_0)\leq d_{\Omega_j}(z_j,w_j)+C\epsilon
\]
for $j$ large with some constant $C>0$. This means that $\lim\limits_{j\rightarrow\infty}d_{\Omega_j}(z_j,w_j)=d_{\widehat{\Omega}}(z_0,w_0)$ and contradicts inequality (\ref{contra}).
\end{proof}

\begin{lemma}\label{quo}
For $z\in\widehat{\Omega}$, then 
\[
\lin M_{\Omega_n}^K(z)=M_{\widehat{\Omega}}^K(z).
\]
And the convergence is uniform on compact sets of $\widehat{\Omega}$.
\end{lemma}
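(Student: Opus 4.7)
The plan is to establish $\limsup_n \mu_n(z) \leq \mu(z) \leq \liminf_n \mu_n(z)$, writing $\mu_n(z) := M^K_{\Omega_n}(z)$ and $\mu(z) := M^K_{\widehat{\Omega}}(z)$, and then upgrade to uniform convergence on compact sets by a contradiction argument. For the upper bound, given $\epsilon>0$ choose $\hat{f} \in \operatorname{Hol}(\mathbb{B}^2, \widehat{\Omega})$ with $\hat{f}(0)=z$ and $1/|\det \hat{f}'(0)|^2 \leq \mu(z) + \epsilon$. For any $r \in (0,1)$ the set $\hat{f}(r\overline{\mathbb{B}^2})$ is a compact subset of $\widehat{\Omega}$, so by normal convergence $\Omega_n \to \widehat{\Omega}$ (Definition \ref{converge}) it is contained in $\Omega_n$ for all large $n$. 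The map $w \mapsto \hat{f}(rw)$ then competes in the infimum defining $\mu_n(z)$, giving $\mu_n(z) \leq (\mu(z)+\epsilon)/r^4$. Letting $n \to \infty$, then $r \to 1^-$, then $\epsilon \to 0^+$ yields $\limsup_n \mu_n(z) \leq \mu(z)$.

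For the lower bound, pick near-extremal maps $f_n \in \operatorname{Hol}(\mathbb{B}^2, \Omega_n)$ with $f_n(0)=z$ and $1/|\det f_n'(0)|^2 \leq \mu_n(z) + 1/n$. The upper bound already supplies $\mu_n(z) \leq \mu(z)+1$ for large $n$, so $|\det f_n'(0)|$ is uniformly bounded below by some $c>0$. The main obstacle is normality of $\{f_n\}$ on compact subsets of $\mathbb{B}^2$: the $\Omega_n$ are not uniformly bounded. I handle this via the distance-decreasing property $d_{\Omega_n}(z, f_n(w)) \leq d_{\mathbb{B}^2}(0,w)$ together with Lemma \ref{kob} and the key observation (established inside the proof of Lemma \ref{kob}) that for large $n$ the Kobayashi ball $B_{\Omega_n}(z,R)$ is compactly contained in $\widehat{\Omega}$, uniformly in $n$. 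This confines $f_n(K')$ for any compact $K' \subset \mathbb{B}^2$ to a fixed compact subset of $\widehat{\Omega}$. Montel's theorem then extracts a subsequence $f_{n_k} \to f$ uniformly on compact subsets; Proposition \ref{map}(2), applicable since $\widehat{\Omega}$ is pseudoconvex and $|\det(df_{n_k})_0| \geq c$, upgrades $f$ to a holomorphic map $\mathbb{B}^2 \to \widehat{\Omega}$ rather than into $\overline{\widehat{\Omega}}$. Since $f(0)=z$ and $|\det f'(0)|^2 = \lim_k |\det f_{n_k}'(0)|^2$, we obtain $\mu(z) \leq 1/|\det f'(0)|^2 \leq \liminf_n \mu_n(z)$.

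For uniform convergence on a compact set $K \subset \widehat{\Omega}$, suppose instead that some sequence $z_j \to z_0 \in K$ satisfies $|\mu_{n_j}(z_j) - \mu(z_j)| > \epsilon_0$ along a subsequence. The upper bound at $z_j$ is obtained from a near-extremal $\hat{f}$ at $z_0$ via the translated competitor $w \mapsto \hat{f}(rw) + (z_j - z_0)$, whose image is a small perturbation of a compactly contained subset of $\widehat{\Omega}$ and hence lies in $\Omega_{n_j}$ for large $j$. The lower bound runs exactly as above at $z_j$, with the same uniform compact containment of Kobayashi balls. Combining these with continuity of $\mu$ at $z_0$ (obtained by applying the same two semicontinuity arguments to the constant sequence $\Omega_n \equiv \widehat{\Omega}$) contradicts the hypothesis, completing the proof.
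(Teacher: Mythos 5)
Your proof is correct and takes essentially the same approach as the paper's: a shrink-and-translate competitor $w\mapsto\hat f(rw)+(z_j-z_0)$ together with normal set-convergence for the upper bound, and compact containment of Kobayashi balls (via Lemma \ref{kob} and the ingredients from \cite{ver}) plus Montel and Proposition \ref{map}(2) for the lower bound, then a moving-point contradiction argument for uniformity. The only differences are organizational — you prove the pointwise limit first and then upgrade to uniform, and you use near-extremal rather than extremal maps, which also supplies the continuity of $M^K_{\widehat\Omega}$ that the paper asserts without comment — but the key lemmas and the mechanism of the argument are identical.
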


\begin{proof}
For a contradiction, we may assume that there exists a compact set $K\subset\widehat{\Omega}$ and $\{z_j\}\subset K$ such that 
\[
\left|M_{\Omega_j}^K(z_j)-M_{\widehat{\Omega}}^K(z_j)\right|>\epsilon_0
\]
for some $\epsilon_0>0$. Moreover, we may assmue that $z_j\rightarrow z_0\in K$ and $K\subset\Omega_j$ for each $j$.  Since $M_{\widehat{\Omega}}^K(z)$ is continuous on $\widehat{\Omega}$, then we obtain that
\[
\left|M_{\Omega_j}^K(z_j)-M_{\widehat{\Omega}}^K(z_0)\right|>\epsilon_0/2
\]
for all $j$ large.

Note that for each $j$, there exists a holomorphic mapping $g_j(z):\mathbb{B}^2\rightarrow\Omega_j$ such that 
\[
g_j(0)=z_j,\left|\operatorname{det}g_j'(0)\right|^{-2}=M_{\Omega_j}^K(z_j).
\]
For fixed $r<1$, we have
\[
\sup_{w\in g_j(B_r(0))}d_{\Omega_j}(z_j,w)\leq \sup_{\zeta\in B_r(0)}d_{\mathbb{B}^2}(0,\zeta)=d_{\mathbb{B}^2}(0,r)
\]
and for each $w\in g_j(B_r(0))$
\[
d_{\Omega_j}(q_j,w)\leq d_{\Omega_j}(z_j,w)+d_{\Omega_j}(q_j,z_j)\leq M
\]
with some constant $M>0$ depending only on $r$. Hence \cite[Lemma 5.6]{ver} implies that $g_j(B_r(0))$ is compactly contained in $\widehat{\Omega}$ for large $j$.

From Lemma \ref{kob}, we know that $ g_j(B_r(0))$ is uniformly bounded, then Proposition \ref{map} and Montel's theorem imply that $g(z):=\lim\limits_{j\rightarrow\infty
	} g_j(z):\mathbb{B}^2\rightarrow\widehat{\Omega}$ is holomorphic. Since $g(0)=z_0$ and $\left|\operatorname{det}g'(0)\right|^{-2}=\lim\limits_{j\rightarrow\infty}\left|\operatorname{det}g_j'(0)\right|^{-2}$, then we obtain that $M_{\widehat{\Omega}}^K(z_0)\leq\liminf\limits_{j\rightarrow\infty} M_{\Omega_j}^K(z_j)$.

On the other hand, suppose $g(z):\mathbb{B}^2\rightarrow\widehat{\Omega}$ is an extremal mapping for $M_{\widehat{\Omega}}^K(z_0)$. For fixed $0<r<1$, we consider the mapping
\[
g_j(z)=g((1-r)z)+z_j-z_0,
\]
then $g_j(0)=z_j$ and $g_j(\mathbb{B}^2)\subset\Omega_j$ for $j$ large. Note that 
\[
\left|\operatorname{det}g_j'(0)\right|=(1-r)^2\left|\operatorname{det}g'(0)\right|,
\]
hence $\limsup\limits_{j\rightarrow\infty}M_{\Omega_j}^K(z_j)\leq M_{\widehat{\Omega}}^K(z_0)$ by letting $r\rightarrow0$, and $\lim\limits_{j\rightarrow\infty}M_{\Omega_j}^K(z_j)=M_{\widehat{\Omega}}^K(z_0)$. This is a contradiction.
\end{proof}

\begin{rmk}\label{rmk2}
By using a similar argument with Lemma \ref{cara}, we know that for each $z\in\widehat{\Omega}$ then
\[
\limsup\limits_{n\rightarrow\infty}M_{\Omega_n}^C(z)\leq M_{\widehat{\Omega}}^C(z).
\]

\end{rmk}

\subsection{Quasi-bounded Geometry}To prove Theorem \ref{thm1}, we need the notion of $quasi$-$bounded\ geometry$ introduced by Yau and Cheng in \cite{cheng}. Further details can be found in \cite{yau}. 

\begin{defn}\label{qb}
		Let $(M,g)$ be a K$\ddot{\operatorname{a}}$hler manifold of complex dimension $n$. We say that $(M,g)$ has $quasi$-$bounded\ geometry$ if there exist constants $r_2>r_1>0$ such that: for any point $z\in M$ there exsits a neighbourhood $U\subset\mathbb{C}^n$ and a nonsingular holomorphic mapping  $\varphi:U\rightarrow M$ such that

(1) $\varphi(0)=z$.

(2) $B_{r_1}(0)\subset U\subset B_{r_2}(0)$.

(3) There exists a constant $C\geq1$ determined only by $r_1,r_2,n$ such that 
\[
\dfrac{1}{C}g_0\leq\varphi^{*}g\leq Cg_0,
\]
 where $g_0$ is the standard Euclidean metric on $\mathbb{C}^n$.

(4) For any integer $q\geq0$, there exists a constant $A_q$ determined by $q,r_1,r_2,n$ such that
\begin{equation}\label{qbg}
\sup_{x\in U}\left|\dfrac{\partial^{|\mu|+|\nu|}(\varphi^{*}g)_{i\bar{k}}}{\partial z^{\mu}\bar{\partial}z^{\nu}}(x)\right|\leq A_q\ \operatorname{for\ all} |\mu|+|\nu|\leq q \operatorname{and} 1\leq i,k\leq n,
\end{equation}
where $(\varphi^{*}g)_{i\bar{k}}$ is the component of $\varphi^{*}g$ on $U$ in terms of the natural coordinates $z=(z_1,\cdots,z_n)$ and $\mu,\nu$ are the multiple indices with $|\mu|=\mu_1+\cdots\mu_n$. Moreover, the mapping $\varphi$ is called a $quasi$-$coordinate\ map$ and $(U,\varphi)$ is called a $quasi$-$coordinate\ chart$ of $M$.
\end{defn}

The following theorem gives a characterization that whether a complex manifold $(M,g)$ has quasi-bounded geometry.

\begin{thmNoParens}[{\cite[Theorem 9]{yau}}]\label{bound}
Let $(M,g)$ be a complete K$\ddot{a}$hler manifold with complex dimension $n$. Then $(M,g)$ has quasi-bounded geometry if and only if for each integer $q\geq0$, there exists a constant $C_q>0$ such that 
\[
\sup_{M}||\nabla^qR(g)||_g\leq C_q.
\]
Here $R(g)$ is the curvature tensor of $g$ and the constants $r_1,r_2$ and $C$ in Definition \ref{qb} can be selected in the way that they are determined only by $C_0$.
\end{thmNoParens}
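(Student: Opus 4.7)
The plan is to address the two directions of the equivalence separately. The forward implication is essentially a direct computation inside a single quasi-coordinate chart, whereas the converse requires an actual construction of such charts starting from the intrinsic curvature bounds. Since $\|\nabla^q R(g)\|_g$ is defined in a manifestly coordinate-free way while the quasi-coordinates of Definition \ref{qb} are designed to trivialize $g$, both directions reduce to translating between extrinsic component estimates and intrinsic norm estimates.

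For the forward direction, fix a quasi-coordinate map $\varphi\colon U\to M$ satisfying conditions (1)--(4) of Definition \ref{qb}. Writing $h_{i\bar k}:=(\varphi^{*}g)_{i\bar k}$, condition (3) forces $(h_{i\bar k})$ and its inverse to be uniformly bounded, and condition (4) gives uniform bounds on every partial derivative of $h_{i\bar k}$. Because the Christoffel symbols $\Gamma^{i}_{jk}=h^{i\bar\ell}\partial_{j}h_{k\bar\ell}$, the curvature components, and all of their covariant derivatives are polynomial expressions in the $h_{i\bar k}$, $h^{i\bar k}$, and their partial derivatives, they are all pointwise bounded on $U$. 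Since $\|\nabla^q R(g)\|_g$ is a contraction of these components against finitely many copies of $h^{i\bar k}$, one obtains $\sup_{M}\|\nabla^q R(g)\|_g\leq C_q$ with $C_q$ depending only on $q,r_1,r_2,n$.

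For the converse, given the intrinsic bounds $\|\nabla^q R(g)\|_g\leq C_q$, I would construct a quasi-coordinate map centered at an arbitrary $z\in M$ in two stages. First, the bound $C_0$ on the full curvature tensor together with completeness yields a uniform upper bound on sectional curvature and, via Bishop--Gromov, a uniform lower bound on the volumes of small geodesic balls. Cheeger's injectivity radius lemma then produces a bound $i_0>0$ depending only on $n$ and $C_0$, and on geodesic balls of radius less than $i_0$ the real exponential map is a diffeomorphism whose full Taylor expansion is controlled to every order by the $C_q$ via the Jacobi equation and its iterated derivatives. Second, to promote this to a \emph{holomorphic} quasi-coordinate, I would replace the real normal coordinates by Kähler normal coordinates: on a sufficiently small ball there exist holomorphic coordinates in which $g_{i\bar k}=\delta_{ik}+O(|z|^{2})$, with every subsequent Taylor coefficient of $h_{i\bar k}$ expressible as a universal polynomial in the covariant derivatives of $R$ at $z$. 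Feeding the bounds $C_q$ into this expansion produces precisely the estimate (\ref{qbg}) and, after possibly shrinking the radius, the metric comparison in condition (3); the resulting radii $r_1,r_2$ and constant $C$ end up depending only on $C_0$.

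The main obstacle lies in this second stage of the converse. The real exponential map gives excellent estimates but is not holomorphic, so it cannot serve directly as $\varphi$. Passing to genuine holomorphic coordinates while preserving uniform derivative bounds requires either a careful use of Kähler normal coordinates, whose construction hinges on solving a finite-order perturbation problem for the Kähler potential, or an $L^{2}$-$\bar\partial$ argument producing holomorphic coordinate functions with Hörmander-type weighted estimates. Either route demands that the injectivity radius lower bound and the bounds on $\nabla^{q}R$ be used simultaneously to control both the metric and all of its holomorphic Taylor coefficients uniformly in the base point $z$. Organizing these estimates so that only $C_{0}$ enters the radii $r_1,r_2$ of the charts, while the higher $C_{q}$ control only the derivative constants $A_q$, is the technical heart of the argument and is where I would expect to spend most of the work.
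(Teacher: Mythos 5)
The statement you are proving is quoted in the paper as a cited result, so there is no in-paper proof to compare against; nevertheless, the proposal contains a genuine and fatal gap in the converse direction, and it misses the very point that motivates the adjective \emph{quasi}.

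The error is in the sentence where you invoke Bishop--Gromov to get ``a uniform lower bound on the volumes of small geodesic balls'' and then apply Cheeger's injectivity-radius lemma. Bishop--Gromov, under a lower Ricci bound, gives an \emph{upper} bound on volumes of balls by comparison with the model space; it gives no lower bound at all. And Cheeger's lemma requires exactly such a volume lower bound (together with a two-sided sectional curvature bound) as a hypothesis. In fact no lower bound on the injectivity radius can follow from the curvature bounds $\sup_M\lVert\nabla^q R(g)\rVert_g\le C_q$ alone: a complete finite-volume hyperbolic Riemann surface with cusps is Kähler, has all covariant derivatives of curvature bounded (indeed constant curvature $-1$), yet its injectivity radius tends to zero down the cusps. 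So the entire two-stage construction built on real geodesic normal coordinates inside an injectivity ball cannot get started.

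What the proposal misses is that Definition~\ref{qb} does \emph{not} require $\varphi$ to be injective---only nonsingular. This is precisely why one speaks of \emph{quasi}-bounded geometry rather than bounded geometry. The construction therefore does not need the injectivity radius at all; it needs the \emph{conjugate} radius, which \emph{is} bounded below purely in terms of the upper sectional curvature bound (hence $C_0$) by the Rauch comparison theorem. The actual argument runs: take $\exp_p\colon B(0,r)\subset T_pM\to M$ with $r$ below the conjugate radius, so that $\exp_p$ is a local diffeomorphism though possibly non-injective; pull back $g$ and the complex structure $J$; control all their $C^k$-norms via Jacobi-field estimates using the bounds $C_q$; then use the Newlander--Nirenberg theorem with estimates (Malgrange's elliptic version) to straighten the pulled-back complex structure to holomorphic coordinates on a uniformly smaller ball, whose radius depends only on $C_0$, while the higher derivative constants $A_q$ depend on the $C_q$. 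Your second stage (passing from real to holomorphic coordinates) correctly identifies Newlander--Nirenberg / $\bar\partial$-methods as the tool, so once the injectivity-radius step is replaced by a conjugate-radius argument and the demand of injectivity is dropped, the rest of your outline does align with the intended proof. The forward direction of your proposal is correct and is the standard direct computation.
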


Similar to \cite[Proposition 6.1]{pin},  by using the notion of quasi-bounded geometry presented above, we are able to prove the following proposition regarding the convergence of K$\ddot{\operatorname{a}}$hler metrics.

\begin{proposition}\label{kah}
Let $\{\Omega_n\}$ and $\widehat{\Omega}$ be the scaling sequence and model domain in Section \ref{sec3.1}, and suppose that $g_n$ is a complete K$\ddot{a}$hler metric on $\Omega_n$ for each $n$. If

(1) There exists a constant $A\geq1$, independently of $n$, such that
\[
\dfrac{1}{A}K_{\Omega_n}(z,v)\leq\sqrt{g_n(z)(v,v)}\leq AK_{\Omega_n}(z,v)
\]
for all $z\in\Omega_n$ and $v\in\mathbb{C}^2$.

(2) For any integer $q\geq0$, there exists a constant $C_q>0$, independently of $n$, such that 
\[
\sup_{\Omega_n}||\nabla^q R(g_n)||_{g_n}\leq C_q.
\]
Then after passing to a subsequence $g_n$ converges locally uniformly in $C^{\infty}$ topology to a complete K$\ddot{a}$hler metric $g_{\infty}$ on $\widehat{\Omega}$.
\end{proposition}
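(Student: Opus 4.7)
The plan is to combine the Cheng--Yau characterization (Theorem \ref{bound}) with the stability of the Kobayashi metric (Lemma \ref{stability}) and a standard Arzel\`a--Ascoli argument. First, hypothesis (2) together with Theorem \ref{bound} implies that each $(\Omega_n, g_n)$ has quasi-bounded geometry with constants $r_1, r_2, C$ and $A_q$ determined only by the $C_q$ and the complex dimension, so they are independent of $n$. Next, fix any compact $K \subset \widehat{\Omega}$; since $\Omega_n \to \widehat{\Omega}$ normally, $K \subset \Omega_n$ for all $n$ large. The bi-Lipschitz comparison (1) and Lemma \ref{stability} yield constants $c_K, C_K > 0$ with
\[
c_K |v|^2 \leq g_n(z)(v,v) \leq C_K |v|^2 \quad \text{for all } z \in K,\ v \in \mathbb{C}^2,
\]
and all $n$ sufficiently large, which gives a uniform $C^0$-bound on the components $(g_n)_{i\bar k}$ in the ambient $\mathbb{C}^2$-coordinates.

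To upgrade to uniform $C^q$-bounds, I would exploit the quasi-coordinate charts $\varphi_{n,z_0}: U_{n,z_0} \to \Omega_n$ furnished by quasi-bounded geometry, with $\varphi_{n,z_0}(0)=z_0 \in K$, $B_{r_1}(0) \subset U_{n,z_0} \subset B_{r_2}(0)$, and the estimate (\ref{qbg}). The comparisons $(1/C)\,g_0 \leq \varphi_{n,z_0}^* g_n \leq C g_0$ on $U_{n,z_0}$ and $c_K g_0 \leq g_n \leq C_K g_0$ near $z_0$ jointly force the operator norm of $d\varphi_{n,z_0}$ to be uniformly bounded above and below. Bootstrapping from
\[
(\varphi_{n,z_0}^* g_n)_{i\bar k} = (g_n)_{\alpha \bar\beta}\, \partial_i \varphi_{n,z_0}^\alpha\,\overline{\partial_k \varphi_{n,z_0}^\beta},
\]
combined with Cauchy estimates on the holomorphic maps $\varphi_{n,z_0}$, yields uniform $C^q$-bounds on $\varphi_{n,z_0}$ and therefore, via its image, uniform $C^q$-bounds on $(g_n)_{i\bar k}$ in the ambient coordinates on a neighborhood of $z_0$. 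A finite subcover of $K$ and a diagonal extraction then produce a subsequence converging in $C^\infty_{\mathrm{loc}}(\widehat{\Omega})$ to a smooth Hermitian $(1,1)$-tensor $g_\infty$.

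Positivity and the closed K\"ahler form condition $d\omega_n = 0$ pass to the $C^\infty$-limit, so $g_\infty$ is a \K{} metric. Taking limits in (1) and applying Lemma \ref{stability} gives
\[
\frac{1}{A} K_{\widehat{\Omega}}(z,v) \leq \sqrt{g_\infty(z)(v,v)} \leq A\, K_{\widehat{\Omega}}(z,v),
\]
and since $\widehat{\Omega}$ is Kobayashi-complete as a pseudoconvex model domain of finite type, this bi-Lipschitz comparison forces $g_\infty$ to be complete. The main technical obstacle is the bootstrap in the second paragraph: quantitatively transferring the intrinsic $C^q$-estimates on $\varphi_{n,z_0}^* g_n$ over the fixed model ball $B_{r_1}(0)$ to $C^q$-estimates on $(g_n)_{i\bar k}$ in the ambient $\mathbb{C}^2$-coordinates, which requires simultaneous uniform control of the holomorphic maps $\varphi_{n,z_0}$ themselves.
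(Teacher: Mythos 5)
Your plan is essentially the same approach as the paper's: invoke Theorem~\ref{bound} to get quasi-bounded geometry with uniform constants, use the quasi-coordinate charts $\varphi_{n,z_0}$ to obtain uniform $C^q_{\mathrm{loc}}$ control on $(g_n)_{i\bar k}$, then extract a $C^\infty_{\mathrm{loc}}$-convergent subsequence and verify completeness via Lemma~\ref{stability}. The paper organizes this as a proof by contradiction and takes a limit $\varphi=\lim\varphi_l$ of the quasi-coordinate maps before inverting, whereas you invert each $\varphi_{n,z_0}$ directly and argue by Arzel\`a--Ascoli; this difference is stylistic rather than mathematical.

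However, there is a genuine gap in the step you flag as the ``main technical obstacle.'' You want to bound $d\varphi_{n,z_0}$ above by comparing $\varphi_{n,z_0}^*g_n\leq Cg_0$ on $U_{n,z_0}$ with the ambient bound $c_K g_0\leq g_n$ ``near $z_0$''. But the lower bound $c_K g_0\leq g_n$ is only available on compact subsets of $\widehat{\Omega}$ (it comes from hypothesis~(1) plus Lemma~\ref{stability}), so to apply it along the image of $\varphi_{n,z_0}$ you must first know that $\varphi_{n,z_0}(B_r(0))$ lies in a fixed compact subset of $\widehat{\Omega}$, uniformly in $n$. Since $\Omega_n$ converges to the unbounded model domain $\widehat{\Omega}$, this is not automatic, and if you try to deduce it from an upper bound on $\lvert d\varphi_{n,z_0}\rvert$ the argument becomes circular. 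The paper breaks this circle using only the Schwarz--Pick contraction of the Kobayashi metric: $\varphi_{n,z_0}$ is holomorphic, hence $d_{\Omega_n}(z_0,\varphi_{n,z_0}(x))\leq d_{B_{r_1}(0)}(0,x)$ for $x\in B_r(0)$, and then Lemma~\ref{kob} together with \cite[Lemma~5.6]{ver} shows the images are uniformly compactly contained in $\widehat{\Omega}$ without any a priori control on $d\varphi_{n,z_0}$. Once this containment is in hand, the remaining steps you sketch (Cauchy estimates, the lower bound $\lvert(d\varphi_{n,z_0})_0(v)\rvert\gtrsim\delta_{\widehat{\Omega}}(z_0)\lvert v\rvert$ from $K_{\Omega_n}(z,v)\leq\lvert v\rvert/\delta_{\Omega_n}(z)$, local inversion, and transfer of the bound (\ref{qbg}) to ambient coordinates) all go through; you should state the Kobayashi-ball containment explicitly and put it before, not after, the claimed operator norm bounds on $d\varphi_{n,z_0}$.
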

\begin{proof}
To prove this proposition, we may need to show that: if $K\subset\widehat{\Omega}$ is any compact set and $\mu,\nu$ are multi-indices, then there exist constants $J\geq0,C(K,\mu,\nu)\geq1$ determined only by $K,\mu,\nu$, such that 
\[
\sup_{n\geq J}\max_{x\in K}\left|\dfrac{\partial^{|\mu|+|\nu|}(g_n)_{i\bar{k}}}{\partial z^{\mu}\bar{\partial}z^{\nu}}(x)\right|\leq C(K,\mu,\nu).
\]

For a contradiction, we may suppose that there exists a compact set $K\subset\widehat{\Omega}$ and multi-indices $\mu,\nu$ such that: for any integer $l\geq0$, there exists a constant $n_l\geq l$ and $x_l\in K$ such that 
\[
\left|\dfrac{\partial^{|\mu|+|\nu|}(g_{n_l})_{i\bar{k}}}{\partial z^{\mu}\bar{\partial}z^{\nu}}(x_l)\right|\geq l.
\]

We may assume that $x_l\rightarrow x_{\infty}\in\widehat{\Omega}$ and $K\subset\Omega_{n_l}$ for all $l\geq1$ because of Definition \ref{converge}. From Theorem \ref{bound}, we obtain that each $(\Omega_n,g_n)$ has quasi-bounded geometry with constants independently of $n$. Hence for any integer $l$, there is a domain $U_l\subset\mathbb{C}^2$ and a nonsingular mapping $\varphi_l:U_l\rightarrow\Omega_{n_l}$ with $\varphi_l(0)=x_l$ satisfying conditions in Definition \ref{qb} with uniform constants $r_1,r_2,C$ and $\{A_q\}_{q\geq0}$.

For fixed $r<r_1$, then 
\begin{align*}
\sup_{w\in\varphi_l(B_r(0))}d_{\Omega_{n_l}}(x_l,w)\leq\sup_{\zeta\in B_r(0)}d_{B_{r_1}(0)}(0,\zeta)= d_{\mathbb{B}^2}\left(0,\frac{r}{r_1}\right).
\end{align*}
Note that for each $w\in\varphi_l(B_r(0))$, we have that 
\[
d_{\Omega_{n_l}}(q_{n_l},w)\leq d_{\Omega_{n_l}}(q_{n_l},z_l)+d_{\Omega_{n_l}}(z_l,w)\leq d_{\Omega_{n_l}}(q_{n_l},z_l)+d_{\mathbb{B}^2}\left(0,\frac{r}{r_1}\right)\leq M
\]
with some constant $M>0$, independently of $l$. Hence from \cite[Lemma 5.6]{ver} each $\varphi_l(B_r(0))$ is compactly contained in $\widehat{\Omega}$  for all $l$ large.

 Combining with Lemma \ref{kob}, we know that each image of $\varphi_l$ on $B_r(0)$ is uniformly bounded. And from condition (3) in Definition \ref{qb}, we obtain that 
\[
\dfrac{1}{C}|v|\leq\sqrt{g_{n_l}(x_l)((d\varphi_l)_0(v),(d\varphi_l)_0(v))}
\]
for each integer $l$ and $v\in\mathbb{C}^2$. It implies that 
\[
\dfrac{1}{C}|v|\leq\sqrt{g_{n_l}(x_l)((d\varphi_l)_0(v),(d\varphi_l)_0(v))}\leq AK_{\Omega_{n_l}}(x_l,(d\varphi_l)_0(v))\leq A\dfrac{|(d\varphi_l)_0(v)|}{\delta_{\Omega_{n_l}}(x_l)}.
\]
and
\[
|(d\varphi_l)_0(v)|\geq \dfrac{\delta_{\Omega_{n_l}}(x_l)}{AC}|v|\rightarrow\dfrac{\delta_{\widehat{\Omega}}(x_{\infty})}{AC}|v|.
\]
Then Proposition \ref{map} and Montel's theorem imply that $\varphi:=\lim\limits_{l\rightarrow\infty}\varphi_l:B_{r_1}(0)\rightarrow\widehat{\Omega}$ is holomorphic and nonsingular at 0.

Note that we can select a neighbourhood $U_1$ such that $\varphi|_{U_1}$ is invertible. Since $\varphi_l$ converges to $\varphi$ in $C^{\infty}$ topology, then there exists a constant $N>0$ such that $\varphi_l|_{U_2}$ is invertible whenever $l\geq N$. Here $U_2$ is a neighbourhood of 0 such that $\bar{U}_2\subset U_1$. Moreover, we know that $\left(\varphi_l|_{U_2}\right)^{-1}$ converges locally uniformly to $\left(\varphi|_{U_2}\right)^{-1}$.

Now we choose a neighbourhood $V$ of $x_{\infty}\in\widehat{\Omega}$ such that $V\subset\varphi(U_2)$. Without loss of generality, we may assume that $V\subset\varphi_l\left(U_2\right)$ for all $l\geq N$ and 
\[
\sup_{l\geq N}\sup_{x\in V}\left|\dfrac{\partial^{|a|+|b|}(\varphi_l|_{U_2})^{-1}}{\partial z^a\bar{\partial}z^b}(x)\right|<\infty
\]
whenever $|a|+|b|\leq|\mu|+|\nu|$. After increasing $N$ if necessarily, we may assmue that $x_l\in V$ whenever $l\geq N$. Combining with  condition (4) in Definition \ref{qb}, we obtain that 
\[
\sup_{l\geq N}\left|\dfrac{\partial^{|\mu|+|\nu|}(g_{n_l})_{i\bar{k}}}{\partial z^{\mu}\bar{\partial}z^{\nu}}(x_l)\right|<\infty,
\]
which is a contracdiction.

After passing to a subsequence, we may assume that $g_n$ converges locally uniformly to a Hermitian metric $g_{\infty}$ on $\widehat{\Omega}$ in $C^{\infty}$ topology. Then Lemma \ref{stab} implies that
\[
\sqrt{g_{\infty}(z)(v,v)}=\lim\limits_{n\rightarrow\infty}\sqrt{g_n(z)(v,v)}\geq\dfrac{1}{A}\lim\limits_{n\rightarrow\infty}K_{\Omega_n}(z,v)=\dfrac{1}{A}K_{\widehat{\Omega}}(z,v)>0
\]
and
\[
\sqrt{g_{\infty}(z)(v,v)}=\lim\limits_{n\rightarrow\infty}\sqrt{g_n(z)(v,v)}\leq{A}\lim\limits_{n\rightarrow\infty}K_{\Omega_n}(z,v)={A}K_{\widehat{\Omega}}(z,v),
\]
hence $g_{\infty}$ is a complete K$\ddot{\operatorname{a}}$hler metric on $\widehat{\Omega}$. And it completes the proof.
\end{proof}

By using the previous proposition, we are able to deduce the following result concerning the stability of Bergman metrics under the scaling sequence.

\begin{proposition}\label{ber}
Let $B_n$ be the Bergman metric on $\Omega_n$, then after passing to a subsequence, there exists a complete K$\ddot{a}$hler metric $B_{\infty}$ such that $B_n$ converges to $B_{\infty}$ on $\widehat{\Omega}$ locally uniformly in $C^{\infty}$ topology.
\end{proposition}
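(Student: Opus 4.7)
The plan is to apply Proposition \ref{kah} with $g_n=B_n$. Two hypotheses must be checked: (i) $B_n$ is uniformly bi-Lipschitz equivalent to $K_{\Omega_n}$ with a constant independent of $n$, and (ii) for every integer $q\geq 0$, $\sup_{\Omega_n}\|\nabla^{q}R(B_n)\|_{B_n}$ is bounded uniformly in $n$.

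Hypothesis (i) is essentially a free consequence of the biholomorphic invariance of the Bergman and Kobayashi metrics. Indeed, for each $n$, $\alpha_n:\Omega\to\Omega_n$ is a biholomorphism, so $\alpha_n^{*}B_n=B_\Omega$ and $K_\Omega(z,v)=K_{\Omega_n}(\alpha_n(z),d\alpha_n(v))$; pulling back the global comparison $\tfrac{1}{A'}B_\Omega\leq K_\Omega\leq A'B_\Omega$ of Remark \ref{rmk} yields the same comparison on each $\Omega_n$ with the same constant $A'$.

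For (ii), the same biholomorphism makes $(\Omega,B_\Omega)$ and $(\Omega_n,B_n)$ holomorphically isometric, so
\[
\sup_{\Omega_n}\|\nabla^{q}R(B_n)\|_{B_n}=\sup_{\Omega}\|\nabla^{q}R(B_\Omega)\|_{B_\Omega}
\]
for every $q\geq 0$. Hypothesis (ii) is therefore equivalent to a single statement on the fixed domain $\Omega$: that $(\Omega,B_\Omega)$ has quasi-bounded geometry, i.e.\ all covariant derivatives of the Bergman curvature tensor are bounded on $\Omega$. This is the genuinely nontrivial input and the main obstacle. For smoothly bounded pseudoconvex domains of finite type in $\mathbb{C}^2$, the required $C^{k}$-bounds on the components of $B_\Omega$ in suitable non-isotropic quasi-coordinate charts near $\partial\Omega$ follow from Catlin's asymptotic analysis \cite{cat} of the Bergman kernel on such domains together with the standard polydisc/scaling geometry associated to the multitype, while in the interior ordinary elliptic regularity gives uniform bounds on every compact subset; via Theorem \ref{bound} this is exactly the desired curvature control.

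Once (i) and (ii) are verified, Proposition \ref{kah} applies directly and yields, after passing to a subsequence, a complete \K\ metric $B_\infty$ on $\widehat{\Omega}$ such that $B_n\to B_\infty$ locally uniformly in $C^\infty$ topology, which is the conclusion of the proposition.
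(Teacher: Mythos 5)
Your overall structure matches the paper's: reduce to verifying the two hypotheses of Proposition \ref{kah} for $g_n = B_n$, use biholomorphic invariance to transfer everything back to the fixed domain $(\Omega, B_\Omega)$, and then apply Proposition \ref{kah}. Your handling of hypothesis (i) via Remark \ref{rmk} and biholomorphic invariance is exactly what the paper does, and you correctly identify that hypothesis (ii) is equivalent to establishing quasi-bounded geometry for $(\Omega, B_\Omega)$.

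The gap is in the key step. You assert that quasi-bounded geometry of $(\Omega, B_\Omega)$ ``follows from Catlin's asymptotic analysis \cite{cat} of the Bergman kernel $\ldots$ together with the standard polydisc/scaling geometry associated to the multitype.'' Catlin's estimates give two-sided comparability of the Bergman metric to the Kobayashi metric (and to a nonisotropic model), but they do not directly yield the uniform $C^k$ bounds on $(\varphi^*B_\Omega)_{i\bar k}$ for all $k$ that Definition \ref{qb} requires, and no quasi-coordinate charts are actually constructed. The paper's proof goes a very different route: it cites the main theorem of \cite{mcn} for two-sided bounds on the holomorphic sectional (hence, in dimension $2$, full sectional) curvature of $B_\Omega$; combines \cite{cat} with \cite[Proposition 2.1]{moduli} to get a positive injectivity radius; extracts from inequalities (5) and (10) of \cite{mcn} a uniform lower bound $\kappa_\Omega(z,z)/B^2_\Omega(z) \geq c > 0$; and then runs the argument of \cite[Lemma 26]{yau}, which uses elliptic regularity of the Bergman kernel under these three conditions to produce the quasi-coordinate charts and the uniform derivative bounds. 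That lemma is precisely what bridges ``bounded curvature + injectivity radius + kernel-to-volume bound'' to ``quasi-bounded geometry,'' and without it (or an equivalent substitute) your verification of hypothesis (ii) is incomplete; the appeal to ``scaling geometry'' and ``elliptic regularity in the interior'' is not a proof of the boundary estimates in question.
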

\begin{proof}
Suppose $\Omega\subset\mathbb{C}^2$ is a smoothly bounded pseudoconvex domain of finite type and $B_{\Omega}$ is the Bergman metric. Then the main theorem in \cite{mcn} implies that $H(B_{\Omega})$ is bounded from above and below, and hence the same as sectional curvature of $B_{\Omega}$. Combining with estimates of Bergman metric in \cite{cat} and \cite[Proposition 2.1]{moduli}, we obtain that $\left(\Omega,B_{\Omega}\right)$ has positive injectivity radius. 

Note that from inequalities (5) and (10) in \cite{mcn}, then there exists a constant $c>0$ such that $\kappa_{\Omega}(z,z)/B^2_{\Omega}(z)\geq c$ for all $z\in\Omega$. Here $B^2_{\Omega}$ is the volume form on $\Omega$ with respect to Bergman metric. Following a similar proof of \cite[Lemma 26]{yau}, then $\left(\Omega,B_{\Omega}\right)$ has quasi-bounded geometry.

 Since Bergman metric is invariant under biholomorphic mappings, then each $(\Omega_n,B_{n})$ has quasi-bounded geometry with uniform constants in Theorem \ref{bound}. It means that for each $q\geq0$ there exists a constant $C_q>0$, independently of $n$, such that
\[
\sup_{\Omega_n}||\nabla^q R(B_n)||_{B_n}\leq C_q
\]
holds for each $n$. Moreover, Remark \ref{rmk} implies that there exists a constant $A\geq1$, independently of $n$, such that 
\[
\frac{1}{A}K_{\Omega_n}(z,v)\leq B_n(z,v)\leq AK_{\Omega_n}(z,v)
\]
for each $z\in\Omega_n$ and $v\in\mathbb{C}^2$. Hence by using Proposition \ref{kah}, we can complete the proof.
\end{proof}

\section{Proof of Theorem \ref{thm1}}\label{sec4}Now we are ready to prove our main theorem. The following theorem plays a crucial role in our proof.
\begin{thmNoParens}[{\cite[Theorem 4]{berg}}]\label{berg}
Let $M$ be a simply connected complex manifold with complex dimension $n$, and for any $\epsilon>0$ there exists a complete K$\ddot{a}$hler metric g on $M$ such that 
\[
-1-\epsilon\leq H(g)\leq -1+\epsilon.
\]
Then $M$ is biholomorphic to the unit ball in $\mathbb{C}^n$.
\end{thmNoParens}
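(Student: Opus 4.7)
The plan is to extract from the family $\{g_\epsilon\}_{\epsilon>0}$ a $C^\infty$ limit metric $g_\infty$ of constant holomorphic sectional curvature $-1$ on $M$, and then invoke the classical uniformization theorem for K\"ahler space forms to conclude. The preliminary observation that makes this strategy viable is that in K\"ahler geometry the holomorphic sectional curvature determines the entire Riemann tensor via Berger's polarization formula, so the two-sided bound $-1-\epsilon\leq H(g_\epsilon)\leq -1+\epsilon$ automatically yields a uniform pointwise bound $\|R(g_\epsilon)\|_{g_\epsilon}\leq C$ with $C$ independent of $\epsilon$. In particular, each $g_\epsilon$ has pinched negative holomorphic sectional curvature, so by Yau's Schwarz lemma $M$ is Kobayashi hyperbolic and $g_\epsilon$ is bi-Lipschitz equivalent to the Kobayashi metric $K_M$ with constants depending only on the pinching.

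The next step is to show that $(M,g_\epsilon)$ has quasi-bounded geometry in the sense of Definition \ref{qb}, with constants uniform in $\epsilon$. By Theorem \ref{bound} this reduces to promoting the pointwise bound on $R(g_\epsilon)$ to $C^q$ bounds $\|\nabla^q R(g_\epsilon)\|_{g_\epsilon}\leq C_q$ for every $q\geq0$. I would accomplish this by a smoothing step: run the K\"ahler--Ricci flow starting at $g_\epsilon$ for a short uniform time and invoke Shi-type local derivative estimates to produce new metrics $\tilde g_\epsilon$ with uniform bounds on all covariant derivatives of curvature, while $C^0$-closeness of the flow keeps the holomorphic sectional curvature of $\tilde g_\epsilon$ inside a slightly enlarged window $[-1-2\epsilon,-1+2\epsilon]$. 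Once uniform quasi-bounded geometry is in hand, a Proposition \ref{kah}-style diagonal extraction yields a subsequential $C^\infty_{\mathrm{loc}}$ limit metric $g_\infty$ on $M$, and uniform convergence of the curvature tensor forces $H(g_\infty)\equiv -1$ identically.

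The last step is purely rigidity theoretic. A simply connected complete K\"ahler manifold of constant holomorphic sectional curvature $-1$ is holomorphically isometric to the unit ball $\mathbb{B}^n$ equipped with its (suitably scaled) Bergman metric; this is the K\"ahler analogue of the classical space form classification. Applying it to $(M,g_\infty)$ produces the required biholomorphism $M\cong\mathbb{B}^n$.

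The main obstacle is the smoothing step: the hypothesis controls only $H(g_\epsilon)$ and hence $R(g_\epsilon)$ pointwise, but says nothing about $\nabla R(g_\epsilon), \nabla^2 R(g_\epsilon),\ldots$, which are exactly what Theorem \ref{bound} requires. Some external regularization (K\"ahler--Ricci flow with Shi estimates, elliptic bootstrap, or replacement by a canonical Cheng--Yau K\"ahler--Einstein metric, available because $M$ is simply connected and, by Step 1, Kobayashi hyperbolic hence biholomorphic to a bounded Stein domain) must be inserted to access the higher derivative bounds. Once those estimates are secured, the compactness-and-rigidity package in the last two steps finishes the proof.
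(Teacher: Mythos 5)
First, note that the statement you were asked to prove is not proved in this paper at all: it is quoted verbatim as \cite[Theorem 4]{berg} and used as a black box in the proof of $(5)\Longrightarrow(1)$, so there is no internal proof to compare against. Judged on its own terms, your outline follows the same general toolkit the paper uses elsewhere (Berger-type polarization to turn the $H(g_\epsilon)$ pinching into a uniform bound on $\|R(g_\epsilon)\|_{g_\epsilon}$, the Wu--Yau equivalence of $\sqrt{g_\epsilon}$ with $K_M$ with constants depending only on the pinching, quasi-bounded geometry plus a Proposition \ref{kah}-type extraction, and finally the Hawley--Igusa classification of simply connected complete K\"ahler space forms of constant holomorphic sectional curvature $-1$). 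That architecture is sound, and the non-collapsing issue you need for the extraction is indeed settled by the uniform two-sided comparison with the fixed metric $K_M$ (which also makes $M$ complete hyperbolic, hence taut, so the Montel arguments in the quasi-coordinate charts go through on an abstract manifold).

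The genuine gap is the smoothing step, which you yourself flag as the main obstacle but then dispose of with an argument that does not work: running the K\"ahler--Ricci flow for a short time and appealing to ``$C^0$-closeness of the flow'' cannot keep $H(\tilde g_\epsilon)$ inside a slightly enlarged window, because $C^0$ (indeed even $C^1$) proximity of metrics gives no control whatsoever on curvature; at $t=0$ the evolution equation $\partial_t R=\Delta R+Q(R)$ involves $\Delta R$, which is exactly the quantity you do not control, so pointwise persistence of the curvature bounds requires delicate localized estimates along the flow. This is precisely the content of \cite[Theorem 7.1]{pin}, which the present paper invokes in its proof of $(5)\Longrightarrow(1)$ and whose conclusion is the ball-averaged inequality $\inf_{B_g(z,r)}H(g)-\epsilon\leq H(h)\leq\sup_{B_g(z,r)}H(g)+\epsilon$, not a consequence of metric closeness; your proof becomes correct if you cite that theorem (applied with the global bounds $-1\pm\epsilon$) in place of your heuristic, together with Shi's derivative estimates at a fixed positive time to get the $C_q$'s uniform in $\epsilon$. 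Your fallback suggestion is also flawed: Kobayashi hyperbolicity of a simply connected manifold does not imply it is biholomorphic to a bounded (Stein) domain, so the proposed detour through a Cheng--Yau K\"ahler--Einstein metric is not available as stated, and even granting such a metric one would still need to pin its holomorphic sectional curvature, which does not follow from the Einstein condition.
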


 Inspired by \cite{uni,Huang}, we deduce the following theorem, which can imply the strong pseudoconvexity of the model domain $\widehat{\Omega}$ if it is biholomorphic to a quotient of the unit ball.
\begin{thm}\label{sph}
Suppose $\Omega\subset\mathbb{C}^2$ is a pseudoconvex domain of finite type with real-analytic boundary, and assume that it admits a plurisubharmonic defining function $\phi:\bar{\Omega}\rightarrow(-\infty,0]$. If it is covered by $\mathbb{B}^2$, then $\partial\Omega$ is spherical. 
\end{thm}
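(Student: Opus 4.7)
My plan is to prove this in two stages. Stage one upgrades the ``covered by $\mathbb{B}^{2}$'' hypothesis to strong pseudoconvexity of $\Omega$, and stage two combines strong pseudoconvexity with real-analyticity of $\partial\Omega$ plus a local boundary extension of inverse branches of the covering to deduce sphericity.

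For stage one, let $\pi\colon\mathbb{B}^{2}\to\Omega$ be the holomorphic covering with deck group $\Gamma\subset\mathrm{Aut}(\mathbb{B}^{2})$. Since $B_{\mathbb{B}^{2}}$ is $\mathrm{Aut}(\mathbb{B}^{2})$-invariant and in particular $\Gamma$-invariant, it descends to a complete K\"ahler metric $g$ on $\Omega$ characterized by $\pi^{*}g=B_{\mathbb{B}^{2}}$. The Bergman metric of $\mathbb{B}^{2}$ has constant holomorphic sectional curvature $-4/3$, so the same holds for $g$, and in particular
\[
\lim_{z\to\partial\Omega}H(g)(z)=-\tfrac{4}{3}.
\]
The implication $(5)\Rightarrow(1)$ of Theorem~\ref{thm1} then gives that $\Omega$ is strongly pseudoconvex.

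For stage two, since $\partial\Omega$ is now real-analytic, compact, and strongly pseudoconvex, its Chern--Moser curvature tensor is a real-analytic CR invariant whose vanishing at a point characterizes sphericity there. By real-analytic unique continuation it suffices to exhibit a single spherical boundary point. Fix $p\in\partial\Omega$ and $z_{n}\to p$ in $\Omega$, and choose lifts $\tilde z_{n}\in\mathbb{B}^{2}$ of $z_{n}$. Since $\pi$ is a local Kobayashi isometry and $d_{\Omega}$ is complete, the $\tilde z_{n}$ leave every compact subset of $\mathbb{B}^{2}$; after applying deck transformations we may assume $\tilde z_{n}\to\tilde p\in\partial\mathbb{B}^{2}$. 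On small open $V_{n}\ni\tilde z_{n}$ on which $\pi|_{V_{n}}$ is injective, set $U_{n}=\pi(V_{n})$ and $f_{n}=(\pi|_{V_{n}})^{-1}\colon U_{n}\to V_{n}$; these are biholomorphisms between open pieces of $\Omega$ and $\mathbb{B}^{2}$ whose closures meet $\partial\Omega$ near $p$ and $\partial\mathbb{B}^{2}$ near $\tilde p$ in real-analytic strongly pseudoconvex pieces. By the local reflection principle for biholomorphisms between real-analytic strongly pseudoconvex hypersurfaces (in the spirit of Pinchuk--Webster and Huang~\cite{Huang}), each $f_{n}$ extends smoothly across $\partial\Omega\cap\overline{U_{n}}$ to a CR-diffeomorphism onto a neighborhood of $\tilde z_{n}$ in $\partial\mathbb{B}^{2}$. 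A normal-families and scaling argument built on Section~\ref{sec3} (specifically Lemma~\ref{kob} and Proposition~\ref{ber}) then passes to the limit and yields a local CR-equivalence of a neighborhood of $p$ in $\partial\Omega$ with an open subset of $\partial\mathbb{B}^{2}$, so $p$ is spherical.

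\textbf{Main obstacle.} The hard part is stage two: controlling the injectivity domains $V_{n}$ and the extensions $f_{n}$ uniformly as $\tilde z_{n}\to\tilde p$. One needs a uniform lower bound on the injectivity radius of $\pi$ near $\partial\mathbb{B}^{2}$, together with a uniform version of the local reflection-principle extension. Here the plurisubharmonic defining function $\phi\colon\bar\Omega\to(-\infty,0]$ should supply the hyperconvexity needed for the Huang-style extension to apply, and the quasi-bounded geometry of the Bergman metric (used in Proposition~\ref{ber}) should supply the uniform injectivity radius. Turning this outline into a rigorous argument is the technical core of the proof.
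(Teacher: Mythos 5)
Your Stage~1 is circular. You invoke the implication $(5)\Rightarrow(1)$ of Theorem~\ref{thm1} to conclude that $\Omega$ is strongly pseudoconvex, but the paper's proof of $(5)\Rightarrow(1)$ itself rests on Theorem~\ref{sph}: after constructing $h_{\infty}$ on the model domain $\widehat{\Omega}$ and invoking Theorem~\ref{berg} to produce a covering $\mathbb{B}^2\to\widehat{\Omega}$, the paper applies Theorem~\ref{sph} to conclude $\widehat{\Omega}$ is spherical. Theorem~\ref{sph} is a prerequisite of $(5)\Rightarrow(1)$, so you cannot use that implication to prove Theorem~\ref{sph}. This also undercuts the framing of Stage~2, which presupposes strong pseudoconvexity of $\partial\Omega$ so that Chern--Moser theory and the strongly pseudoconvex reflection principle apply.

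The paper's actual route is much more direct and avoids this. Fix a boundary point $p$ and a small simply connected piece $V\cap\Omega$; a germ of $\pi^{-1}$ extends to a biholomorphism $f\colon V\cap\Omega\to f(V\cap\Omega)\subset\mathbb{B}^2$. The plurisubharmonic defining function is used concretely, not just as a heuristic: applying the Hopf lemma to $\phi\circ\pi$ on $\mathbb{B}^2$ gives the two-sided distance comparison $c_1\,\delta_{\mathbb{B}^2}(f(x))\le|\phi(x)|\le c_2\,\delta_{\Omega}(x)$, and combined with Kobayashi metric estimates near $\partial\mathbb{B}^2$ this yields a H\"older continuous extension of $f$ to $\partial\Omega\cap V$ mapping into $\partial\mathbb{B}^2$. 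Since both boundaries are real-analytic and the extension is non-constant, Huang's Theorem~A upgrades it to a proper holomorphic map $f\colon V_1\to f(V_1)$ across the boundary, preserving sides. The biholomorphicity of $f$ on $V_1\cap\Omega$ then forces the proper mapping degree to equal $1$, so $f$ is a biholomorphism on all of $V_1$, and $\partial\Omega$ is spherical at $p$. This argument works at every boundary point, so no unique-continuation step is needed, and it never invokes strong pseudoconvexity of $\Omega$. Your Stage~2 scaling/normal-families sketch is also doing a lot of work the paper does not need and was left as an acknowledged gap; but the decisive flaw is the circular use of Theorem~\ref{thm1}.
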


\begin{proof}
Suppose $\pi:\mathbb{B}^2\rightarrow\Omega$ is a covering map, $p\in\partial\Omega$ is an arbitrary boundary point. Let $V$ be a coordinate neighbourhood of $p$ such that $V\cap\Omega$ is simply connected, then the germ of $\pi^{-1}$ extends to a biholomorphic mapping $f$ from $V\cap\Omega$ to an open subset of $\mathbb{B}^2$.

 By applying Hopf lemma to the plurisubharmonic function $\phi\circ\pi$ on $\mathbb{B}^2$, then there exist constants $c_1,c_2>0$, independently of $x\in V\cap\Omega$, such that 
\[
c_1\delta_{\mathbb{B}^2}(f(x))\leq |\phi\circ\pi(f(x))|=|\phi(x)|\leq c_2\delta_{\Omega}(x)
\]
for all $x\in V\cap\Omega$.

 We know that the map $f$ extends to $\partial\Omega\cap V$ as a H$\ddot{\operatorname{o}}$lder continuous map sending $\partial\Omega\cap V$ to the unit sphere $\partial\mathbb{B}^2$, by using argument based on the asymptotic behaviour of Kobayashi metric of $\mathbb{B}^2$(see also \cite[section 2.1]{uni} or \cite[section 5.1]{pinch}). Note that the extension of $f$ is non-constant by uniqueness theorem of boundary and both $\partial\Omega$ and $\partial\mathbb{B}^2$ are real analytic. Then Theorem A in \cite{Huang} implies that $f$ can be extended as a proper holomorphic mapping, still denoted by $f$, in a neighbourhood of $p$ that preserves the sides of $\partial\Omega$ and $\partial\mathbb{B}^2$. It means that there exists a neighbourhood $V_1\subset\mathbb{C}^2$ of $p$ such that $f$ extends a holomorphic mapping denoted by $f:V_1\rightarrow f(V_1)$, which is proper and preserves the sides of $\partial\Omega$ and $\partial\mathbb{B}^2$. Moreover, we have that $f(V_1\cap\Omega)\subset\mathbb{B}^2$ and $f(V_1\setminus\bar{\Omega})\subset f(V_1)\setminus\bar{\mathbb{B}}^2$. Since $f:V_1\rightarrow f(V_1)$ is proper, then \cite[Theorem 15.1.9]{rudin} implies that there exists an integer $k\geq1$ such that
\begin{align*}
&\# f^{-1}(w)=k,\ w\in f(V_1)\setminus S,\\
&\# f^{-1}(w)<k,\ w\in S.
\end{align*}
Here $\#f^{-1}(w)$ means the cardinality of $f^{-1}(w)$ and $S$ is the set of critical values of $f|_{V_1}$. Note that $f|_{V_1\cap\Omega}:V_1\cap\Omega\rightarrow f(V_1\cap\Omega)$ is biholomorphic, hence $k=1$ and $f$ is biholomorphic from $V_1$ to its image. This implies that $\partial\Omega$ is spherical.
\end{proof}

The following proposition implies that if $(\Omega,g)$ satisfies condition (5) of Theorem \ref{thm1}, then $\sqrt{g(z)(v,v)}$ and $K_{\Omega}(z,v)$ are bi-Lipschitz equivalent. 

\begin{proposition}\label{compa}
Let $\Omega\subset\mathbb{C}^2$ be a smoothly bounded pseudoconvex domain of finite type and $g$ be a complete K${\ddot{a}}$hler metric on $\Omega$. Suppose $(\Omega,g)$ has pinched negative holomorphic sectional curvature outside a compact subset, then there exists a constant $A\geq1$ such that 
\[
\dfrac{1}{A}K_{\Omega}(z,v)\leq \sqrt{g(z)(v,v)}\leq AK_{\Omega}(z,v)
\]
for all $z\in\Omega$ and $v\in\mathbb{C}^2$.
\end{proposition}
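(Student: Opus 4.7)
The plan is to partition $\Omega$ into a relatively compact ``inner'' region around the given compact subset $K_{0}$ and its complement, and handle each part separately. Let $K_{0} \subset \Omega$ be the compact set from the hypothesis with $-a \leq H(g) \leq -b$ on $\Omega \setminus K_{0}$ for constants $a, b > 0$. Fix $R_{0} > 0$ and set $K := \{ z \in \Omega : d_{\Omega}(z, K_{0}) \leq R_{0} \}$. Since $\Omega$ is a smoothly bounded pseudoconvex domain of finite type in $\mathbb{C}^{2}$, the Kobayashi distance $d_{\Omega}$ is complete on $\Omega$, so $K$ is relatively compact in $\Omega$ by \cite[Lemma 5.6]{ver}. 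On $K$, both $\sqrt{g(z)(v,v)}$ and $K_{\Omega}(z,v)$ are continuous and strictly positive on the unit sphere bundle, so a standard compactness argument yields $A_{1} \geq 1$ with $\tfrac{1}{A_{1}} K_{\Omega}(z,v) \leq \sqrt{g(z)(v,v)} \leq A_{1} K_{\Omega}(z,v)$ for $z \in K$.

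For the upper bound $\sqrt{g(z)(v,v)} \leq A K_{\Omega}(z,v)$ on $\Omega \setminus K$, I would apply the Ahlfors--Schwarz lemma. Fix $r \in (0,1)$ with $d_{\Delta}(0,r) < R_{0}$. For any $z \in \Omega \setminus K$ and $f \in \operatorname{Hol}(\Delta, \Omega)$ with $f(0) = z$ and $df_{0}(\xi) = v$, the Kobayashi distance-decreasing property yields $f(\Delta_{r}) \subset B_{d_{\Omega}}(z, d_{\Delta}(0, r)) \subset \Omega \setminus K_{0}$, where $H(g) \leq -b$. The Ahlfors--Schwarz lemma applied to the restriction $f|_{\Delta_{r}} \colon \Delta_{r} \to \Omega \setminus K_{0}$, with $\Delta_{r}$ carrying its Poincar\'e metric, then furnishes a uniform bound $\sqrt{g(z)(v,v)} \leq A_{2} |\xi|$ with $A_{2}$ depending only on $b$ and $r$; taking the infimum over such $f$ gives the desired upper bound.

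The lower bound $K_{\Omega}(z,v) \leq A \sqrt{g(z)(v,v)}$ on $\Omega \setminus K$ is the main obstacle. The plan is to produce, for each $z \in \Omega \setminus K$, a nonsingular holomorphic map $\varphi_{z} \colon B_{r_{0}}(0) \to \Omega$ with $\varphi_{z}(0) = z$ and $\varphi_{z}^{\ast} g$ comparable to the Euclidean metric on $B_{r_{0}}(0)$, with $r_{0}$ and the comparability constant independent of $z$. Since $g$ is K\"ahler with pinched holomorphic sectional curvature on $\Omega \setminus K_{0}$, the full curvature tensor $R(g)$ is bounded there, and completeness of $g$ gives a uniform injectivity-radius lower bound outside a slight enlargement of $K_{0}$. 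Standard bounded-geometry arguments in the spirit of Greene--Wu, paralleling the results underlying Theorem \ref{bound}, then furnish $\varphi_{z}$ satisfying conditions (1)--(3) of Definition \ref{qb}. Each such $\varphi_{z}$ is a competitor in the definition of the Kobayashi metric, giving $K_{\Omega}(z, v) \leq K_{B_{r_{0}}(0)}(0, (d\varphi_{z})_{0}^{-1}(v)) \leq C \sqrt{g(z)(v,v)}$ for a uniform constant $C$. Combining this with the compact-case and the Schwarz estimates then yields the desired bi-Lipschitz equivalence on all of $\Omega$ with $A = \max(A_{1}, A_{2}, C)$.
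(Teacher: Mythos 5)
Your decomposition and upper--bound argument (Ahlfors--Schwarz on small disks $\Delta_r$ whose image stays in $\Omega\setminus K_0$) are sound, and give a valid alternative to the paper's route through Wu--Yau's Lemma 19 combined with the localization inequality from \cite{geh}. The lower bound $K_\Omega(z,v)\le A\sqrt{g(z)(v,v)}$, however, has a genuine gap.

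To build the uniform nonsingular holomorphic maps $\varphi_z\colon B_{r_0}(0)\to\Omega$ with $\varphi_z^*g$ bi-Lipschitz to $g_0$, you appeal to bounded/quasi-bounded geometry. Two problems. First, the claim that ``completeness of $g$ gives a uniform injectivity-radius lower bound outside a slight enlargement of $K_0$'' is false: completeness together with bounded curvature does not control injectivity radius (hyperbolic cusps or thin flat cylinders are standard counterexamples). Second, even setting injectivity aside and working with quasi-coordinate charts, the construction behind Theorem~\ref{bound} requires bounds on $\|\nabla^qR(g)\|_g$ for \emph{all} $q\ge 0$, whereas the hypothesis of the proposition (pinched $H(g)$ outside a compact set) only controls $\|R(g)\|_g$, with no control on its covariant derivatives. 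There is therefore no license to conclude that $(\Omega,g)$ has quasi-bounded geometry, and hence no uniform $r_0$ and $C$ for the maps $\varphi_z$. This is exactly why Proposition~\ref{kah} explicitly assumes bounds on all $\nabla^qR(g_n)$, and why the proof of Proposition~\ref{ber} has to establish such bounds for the Bergman metric before invoking it.

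The paper's proof sidesteps the construction entirely by using the $\mathbb{C}^2$/finite-type hypothesis at a different point. Namely, it applies the Yau--Schwarz lemma to the Carath\'eodory extremal map $f_1\colon\Omega\to\Delta$, using only that $\operatorname{Ric}(g)$ is bounded below (which \emph{does} follow from pinched $H(g)$ on a K\"ahler manifold away from $K$, plus compactness on $K$) to obtain $\sqrt{g(z)(v,v)}\ge A_4\,C_\Omega(z,v)$, and then invokes Catlin's comparison $C_\Omega\asymp K_\Omega$ on finite-type pseudoconvex domains in $\mathbb{C}^2$ (Remark~\ref{rmk}) to convert this into the desired lower bound against $K_\Omega$. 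You should replace your quasi-coordinate step with this argument.
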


\begin{proof}
Note that there exists a compact set $K\subset\Omega$ and positive constants $a,b>0$ such that 
\[
-a\leq H(g)\leq -b
\]
holds outside $K$. Then from Lemma 19 in \cite{yau}, there exists a constant $A_1>0$ such that 
\[
\sqrt{g(z)(v,v)}\leq A_1K_{\Omega\setminus K}(z,v)
\]
for all $z\in\Omega\setminus K$ and $v\in\mathbb{C}^2$.

From inequality (4.1) in \cite[Section 4]{geh}, we know that there exists a compact subset $K'\subset\Omega$ and a constant $A_2\geq1$ such that $K\subset K'$ and
\[
K_{\Omega\setminus K}(z,v)\leq A_2K_{\Omega}(z,v)
\]
for $z\in\Omega\setminus K'$ and $v\in\mathbb{C}^2$. Hence we obtain that $\sqrt{g(z)(v,v)}\leq A_1A_2K_{\Omega}(z,v)$ for $z\in\Omega\setminus K',v\in\mathbb{C}^2$. Since $K'$ is compact, there exists a constant $A_3>0$ such that $\sqrt{g(z)(v,v)}\leq A_3K_{\Omega}(z,v)$ for $z\in K',v\in\mathbb{C}^2$. Hence we obtain
\[
\sqrt{g(z)(v,v)}\leq \max\{A_1A_2,A_3\}K_{\Omega}(z,v)
\]
for $z\in\Omega,v\in\mathbb{C}^2$.

On the other hand, for $z\in\Omega$ and $v\in\mathbb{C}^2$, let $f_1:\Omega\rightarrow\Delta$ be the extremal mapping of $C_{\Omega}(z,v)$. Note that the Ricci curvature of $g$ is bounded from below. Considering Yau-Schwarz Lemma \cite[Theorem 2]{ys} with respect to $f_1$, we know that there exists a constant $A_4>0$ such that 
\[
\sqrt{g(z)(v,v)}\geq A_4C_{\Omega}(z,v).
\]
Combining with Remark \ref{rmk}, there exists a constant $A_5>0$ such that 
\[
\sqrt{g(z)(v,v)}\geq A_5K_{\Omega}(z,v),
\]
which completes the proof.
\end{proof}

\noindent$Proof\ of\ Theorem\ \ref{thm1}.$ To prove the main theorem, it is enough to prove $(3)\Longrightarrow(1),(4)\Longrightarrow(1)$ and $(5)\Longrightarrow(1)$. For any $p\in\partial\Omega$, we assmue that $p=(0,0)$ and $\{\Omega_n\}$ is the scaling sequence, constructed in Section \ref{sec3.1}, with respect to $p$ for $\Omega_n=\alpha_n(\Omega)$.
\medskip

\noindent $(3)\Longrightarrow(1):$ We now consider the quotient invariant $q_{\widehat{\Omega}}(z)$ on $\widehat{\Omega}$. For fixed $w\in\widehat{\Omega}$, we have that $q_{\widehat{\Omega}}(w)\leq1$. From Lemma \ref{quo} and Remark \ref{rmk2}, we obtain
\[
\limsup\limits_{n\rightarrow\infty}q_{\Omega_n}(w)\leq q_{\widehat{\Omega}}(w)\leq 1.
\]
Since $q_{\Omega_n}(w)$ is biholomorphically invariant and $\li q_{\Omega}(z)=1$, then 
\begin{align*}\label{limit}
q_{\Omega_n}(w)=q_{\Omega}(\alpha^{-1}_n(w))\rightarrow 1.
\end{align*}
It means that $q_{\widehat{\Omega}}(w)=1$ and Remark \ref{rmk1} implies that $\widehat{\Omega}$ is biholomorhpic to $\mathbb{B}^2$. Note that $\widehat{\Omega}$ has real-analytic boundary and $r(z,w)=\operatorname{Re}w+P_{\infty}(z)$ is plurisubharmonic since $P_{\infty}(z)$ is subharmonic, hence $\partial\widehat{\Omega}$ is spherical by Theorem \ref{sph}. Moreover, we obtain that $P_{\infty}(z)=c_1|z|^2$ for some constant $c_1>0$. By using the same argument in \cite[Section 4]{kim}, we obtain that the domain $\Omega$ is strongly pseudoconvex at $p$. And it completes the proof.
\medskip

\noindent$(4)\Longrightarrow(1)$: For $(z,v)\in\widehat{\Omega}\times\mathbb{C}^2$, we have $K_{\widehat{\Omega}}(z,v)=C_{\widehat{\Omega}}(z,v)$. To see this, we may assume $0\neq v$ and observe that 
\[
\limsup\limits_{n\rightarrow\infty}\dfrac{C_{\Omega_n}(z,v)}{K_{\Omega_n}(z,v)}\leq \dfrac{C_{\widehat{\Omega}}(z,v)}{K_{\widehat{\Omega}}(z,v)}\leq1
\]
from Lemma \ref{stability} and Lemma \ref{cara}. Note that $\li\frac{C_{\Omega}(z,v)}{K_{\Omega}(z,v)}=1$ uniformly for $0\neq v$ implies that for any $\epsilon>0$, there exists a constant $\delta>0$ such that
\[
 1-\epsilon<\frac{C_{\Omega}(z,v)}{K_{\Omega}(z,v)}< 1+\epsilon
\]
for any $0\neq v$ and $z\in\Omega$ with $\delta_{\Omega}(z)<\delta$. Since $\alpha_n^{-1}(z)$ converges to $p\in\partial\Omega$, hence for large $n$ we obtain $\delta_{\Omega}(\alpha_n^{-1}(z))<\delta$. Then 
\[
\dfrac{C_{\Omega_n}(z,v)}{K_{\Omega_n}(z,v)}=\dfrac{C_{\Omega}(\alpha_n^{-1}(z),(d\alpha_n^{-1})v)}{K_{\Omega}(\alpha_n^{-1}(z),(d\alpha_n^{-1})v)}>1-\epsilon
\]
for large $n$. Taking limit for $n$ implies that $C_{\widehat{\Omega}}(z,v)=K_{\widehat{\Omega}}(z,v)$. Moreover, in this case, we also have $\lim\limits_{n\rightarrow\infty}C_{\Omega_n}(z,v)=C_{\widehat{\Omega}}(z,v)$ for $z\in\widehat{\Omega}$ and $v\in\mathbb{C}^2$. To see this, we may assume that there exists $(z_0,v_0)\in\widehat{\Omega}\times\mathbb{C}^2$ with $v_0\neq0$ such that 
\[
1>\lim_{n\rightarrow\infty}\frac{C_{\Omega_n}(z_0,v_0)}{C_{\widehat{\Omega}}(z_0,v_0)}
\]
after passing to a subsequence. However, we have
\[
\lim_{n\rightarrow\infty}\frac{C_{\Omega_n}(z_0,v_0)}{C_{\widehat{\Omega}}(z_0,v_0)}=\lim_{n\rightarrow\infty}\frac{C_{\Omega_n}(z_0,v_0)}{K_{\widehat{\Omega}}(z_0,v_0)}=\lim_{n\rightarrow\infty}\frac{C_{\Omega_n}(z_0,v_0)}{K_{\Omega_n}(z_0,v_0)}=1.
\]
This is a contradiction.

 From Proposition \ref{ber}, the Bergman metrics $B_n$ on $\Omega_n$ converges to a complete K$\ddot{\operatorname{a}}$hler metric $B_{\infty}$ locally uniformly on $\widehat{\Omega}$. More precisely, $\li\frac{B_{\Omega}(z,v)}{K_{\Omega}(z,v)}=\sqrt{3}$ uniformly for $0\neq v$ implies that $\sqrt{B_{\infty}(z)(v,v)}=\sqrt{3}K_{\widehat{\Omega}}(z,v)$ for all $z\in\widehat{\Omega}$ and $v\in\mathbb{C}^2$ by using the same argument as above. Since $K_{\widehat{\Omega}}$ is a K$\ddot{\operatorname{a}}$hler metric, the main result in \cite{stanton} implies that $\widehat{\Omega}$ is biholomorphic to the unit ball. Therefore, Theorem \ref{sph} implies that it is strongly pseudoconvex, and $\Omega$ is also strongly pseudoconvex at $p$. This completes the proof.
\medskip

\noindent $(5)\Longrightarrow(1)$: Suppose $(\Omega,g)$ satisfies condition (5) of Theorem \ref{thm1} and we may assume that $c=1$. Then there exists a compact set $K$ such that 
\[
-\dfrac{3}{2}\leq H(g)(X)\leq -\dfrac{1}{2}
\] 
for all $z\in\Omega\setminus K$ and $0\neq X\in T_z\Omega$. Note that there exists a constant $a>0$ such that
\[
||R(g)(z)||_g\leq a
\]
for all $z\in\Omega$, Indeed, equality (6.1) in \cite{gold} implies that $||R(g)(z)||_g\leq a_1$ for all $z\in\Omega\setminus K$ with some constant $a_1>0$. Since $K$ is compact, there exists a constant $a_2>0$ such that $||R(g)(z)||_g\leq a_2$ for $z\in K$.

 From Theorem 7.1 in \cite{pin}, we know that for any $\epsilon>0$ and $r>0$, there exists a complete K$\ddot{\operatorname{a}}$hler metric $h$ on $\Omega$ such that 

($\rmnum{1}$) $g$ and $h$ are $(1+\epsilon)$-bi-Lipschitz.

($\rmnum{2}$) for each $q\geq0$, there exists a constant $C_q>0$ such that
\[
\sup_{z\in\Omega}||\nabla^qR(h)||_h\leq C_q.
\]

($\rmnum{3}$) if $z\in\Omega$, then
\begin{equation}\label{sec}
\inf_{B_g(z,r)}H(g)-\epsilon\leq H(h)\leq\sup_{B_{g}(z,r)}H(g)+\epsilon.
\end{equation}
Here $B_g(z,r)$ is the metric ball with respect to $g$ with radius $r$ centered at $z$.  Then Proposition \ref{compa} implies that there exists a constant $A'\geq1$ such that
\[
\dfrac{1}{A'}K_{\Omega}(z,v)\leq\sqrt{h(z)(v,v)}\leq A'K_{\Omega}(z)(v,v)
\] 
for all $z\in\Omega$ and $v\in\mathbb{C}^2$.

Now we consider the pullback metric $h_n:=(\alpha_n^{-1})^*h$ on $\Omega_n$, which is a complete K$\ddot{\operatorname{a}}$hler metric. Since Kobayashi metric is invariant under biholomorphic mapping, then we obtain
\[
\dfrac{1}{A'}K_{\Omega_n}(z,v)\leq\sqrt{h_n(z)(v,v)}\leq A'K_{\Omega_n}(z,v)
\]
for all $z\in\Omega_n$ and $v\in\mathbb{C}^2$. Moreover, for each $n\geq 1$ and $q\geq0$, we have 
\[
\sup_{\Omega_n}||\nabla^qR(h_n)||_{h_n}=\sup_{\Omega}||\nabla^qR(h)||_h\leq C_q	. 
\]

From Proposition \ref{kah}, after passing to a subsequence we may assume that $h_n$ converges locally uniformly to a complete K$\ddot{\operatorname{a}}$hler metric $h_{\infty}$ on $\widehat{\Omega}$. Inequality (\ref{sec}) implies that $-1-\epsilon\leq H(h_{\infty})\leq -1+\epsilon$ on $\widehat\Omega$. 

Let $\Sigma$ be the universal covering space and $\pi:\Sigma\rightarrow\widehat{\Omega}$ be the covering map, we now prove it is $\mathbb{B}^2$. Considering the pullback metric $\pi^*h_{\infty}$ on $\Sigma$, then it is a complete K$\ddot{\operatorname{a}}$hler metric and 
\[
-1-\epsilon\leq H(\pi^*h_{\infty})\leq -1+\epsilon
\]
on $\Sigma$. Since $\epsilon$ is arbitray, then Theorem \ref{berg} implies that $\Sigma$ is biholomorphic to $\mathbb{B}^2$, and $\widehat{\Omega}$ is covered by $\mathbb{B}^2$. Applying Theorem \ref{sph}, we conclude that $\widehat{\Omega}$ is strongly pseudoconvex, which means that $\Omega$ is also strongly pseudoconvex at $p$. This completes the proof.
$\hfill\qed$

\vspace*{5mm}
\noindent {\bf Funding}.
The research of the first author was supported by National Key R\&D Program of China (Grant No. 2021YFA1003100), NSFC (Grant No. 11925107, 12226334), Key Re-search Program of Frontier Sciences, CAS (Grant No. ZDBS-LY-7002).

\bibliography{ref}
\bibliographystyle{plain}{}
\end{document}